\theoremstyle{plain}
\newtheorem{thrm}{Theorem}[section]
\newtheorem*{thrm*}{Theorem}
\newtheorem{lemma}[thrm]{Lemma}
\newtheorem{prop}[thrm]{Proposition}
\newtheorem{cor}[thrm]{Corollary}
\theoremstyle{definition}
\newtheorem{dfn}[thrm]{Definition}
\theoremstyle{remark}
\newtheorem{rmrk}[thrm]{Remark}
\theoremstyle{example}
\numberwithin{equation}{section}
\begin{document}

\newcommand{\tx}{\tilde x}
\newcommand{\R}{\mathbb R}
\newcommand{\N}{\mathbb N}
\newcommand{\C}{\mathbb C}
\newcommand{\lie}{\mathcal G}
\newcommand{\hN}{\mathcal N}
\newcommand{\D}{\mathcal D}
\newcommand{\A}{\mathcal A}
\newcommand{\B}{\mathcal B}
\newcommand{\sL}{\mathcal L}
\newcommand{\sLi}{\mathcal L_{\infty}}

\newcommand{\G}{\Gamma}
\newcommand{\x}{\xi}

\newcommand{\eps}{\epsilon}
\newcommand{\al}{\alpha}
\newcommand{\be}{\beta}
\newcommand{\p}{\partial}  %followed by _
\newcommand{\lig}{\mathfrak}

\def\dist{\mathop{\varrho}\nolimits}

\newcommand{\BCH}{\operatorname{BCH}\nolimits}
\newcommand{\Lip}{\operatorname{Lip}\nolimits}
\newcommand{\Hol}{C}                             % Is this the right notation? $\Lambda$?
\newcommand{\lip}{\operatorname{lip}\nolimits}
\newcommand{\capQ}{\operatorname{Cap}\nolimits_Q}
\newcommand{\pCap}{\operatorname{Cap}\nolimits_p}
\newcommand{\Om}{\Omega}
\newcommand{\om}{\omega}
\newcommand{\half}{\frac{1}{2}}
\newcommand{\e}{\epsilon}
\newcommand{\vn}{\vec{n}}
\newcommand{\X}{\Xi}
\newcommand{\tLip}{\tilde  Lip}

\newcommand{\Span}{\operatorname{span}}

\newcommand{\ad}{\operatorname{ad}}
\newcommand{\Hm}{\mathbb H^m}
\newcommand{\Hn}{\mathbb H^n}
\newcommand{\Hone}{\mathbb H^1}
\newcommand{\Lie}{\mathfrak}
\newcommand{\Layer}{V}
\newcommand{\hgrad}{\nabla_{\!H}}
\newcommand{\im}{\textbf{i}}
\newcommand{\nz}{\nabla_0}
\newcommand{\s}{\sigma}
\newcommand{\se}{\sigma_\e}

\newcommand{\ued}{u^{\e,\delta}}
\newcommand{\ueds}{u^{\e,\delta,\sigma}}
\newcommand{\tnabla}{\tilde{\nabla}}

\newcommand{\bx}{\bar x}
\newcommand{\by}{\bar y}
\newcommand{\bt}{\bar t}
\newcommand{\bs}{\bar s}
\newcommand{\bz}{\bar z}
\newcommand{\btau}{\bar \tau}

\newcommand{\LC}{\mbox{\boldmath $\nabla$}}
\newcommand{\Ne}{\mbox{\boldmath $n^\e$}}
\newcommand{\nuo}{\mbox{\boldmath $n^0$}}
\newcommand{\nuu}{\mbox{\boldmath $n^1$}}
\newcommand{\nue}{\mbox{\boldmath $n^\e$}}
\newcommand{\nuek}{\mbox{\boldmath $n^{\e_k}$}}
\newcommand{\dse}{\nabla^{H\Su, \e}}
\newcommand{\dso}{\nabla^{H\Su, 0}}
\newcommand{\tX}{\tilde X}

\newcommand\red{\textcolor{red}}
\newcommand\green{\textcolor{green}}

\newcommand{\Xie}{X^\epsilon_i}
\newcommand{\Xje}{X^\epsilon_j}
\newcommand{\Su}{\mathcal S}
\newcommand{\F}{\mathcal F}

\title[Mean curvature flow ]{Regularity of mean curvature flow \\ of graphs on Lie groups free up to step 2}

\author[Capogna]{Luca Capogna}
%\thanks{L. C. is partially supported by NSF award  DMS 1101478}
\address{Luca Capogna\\Department of Mathematical Sciences, Worcester Polytechnic Institute\\Worcester, MA 01609
}
\email{lcapogna@wpi.edu}

\author[Citti]{Giovanna Citti}
%\thanks{GC is partially supported by}
\address{Giovanna Citti\\ Dipartimento di Matematica, Universit\'a di Bologna \\
 Italy}
\email{giovanna.citti@unibo.it}
%}
\author[Manfredini]{Maria  Manfredini}
\address{Maria  Manfredini\\ Dipartimento di Matematica, Universit\'a di Bologna \\
 Italy}\email{maria.manfredini@unibo.it}
\keywords{mean curvature flow, sub-Riemannian geometry, Carnot groups\\
%The authors are partially funded by NSF award DMS-1449143 (LC) and by CG-DICE ISERLES European Project (GC) and  by the University of Bologna:
%funds for selected research topics (MM)}
{The authors are partially funded by MAnET "Metric Analysis for Emergent Technologies",
Marie Curie Initial Training Network, grant agreement n. 607643 (GC and MM) and by the NSF award DMS-1449143 (LC) }}

\begin{abstract} We consider (smooth) solutions of the mean curvature flow of  graphs over  bounded domains in a Lie group free up to step two (and not necessarily nilpotent), endowed with a one parameter family of Riemannian metrics $\sigma_\e$ collapsing to a subRiemannian metric $\sigma_0$ as $\e\to 0$. We establish $C^{k,\alpha}$ estimates for this flow, that are uniform as $\e\to 0$ and as a consequence prove long time existence for the subRiemannian mean curvature flow of the graph. Our proof extend to the setting of every step two Carnot group (not necessarily free) and can be adapted following our previous work in  \cite{CCM3} to the total variation flow.
\end{abstract}
\maketitle

\section{Introduction}

The mean curvature flow 
%$t\to u(\cdot, t)$ 
is the motion of a surface where each points 
is moving in the direction of the normal with speed equal to the mean curvature 
In the case where the evolution of graphs $S_t=\{(x,u(x,t))\}\subset \R^n\times \R$ is considered, 
then, provided enough regularity is assumed, the function $u$ satisfies the equation $$\p_t u=\sqrt{1+|\nabla u|^2}\, div\left(\frac{\nabla u}{\sqrt{1+|\nabla u|^2}}\right).$$ Given appropriate boundary/initial conditions, global in time solutions asymptotically converge to minimal graphs. 
 
 \bigskip
 In this paper we  study  long time existence of graph solutions of the mean curvature flow 
 in a special class of degenerate Riemannian ambient spaces: The so-called sub-Riemannian 
H\"ormander type setting \cite{fol:1975}, \cite{ste:harmonic}. In particular we will focus on a class of  Lie groups endowed with a metric structure  $(G,\sigma_0)$ that arises as limit of collapsing left-invariant {\it tame} Riemannian structures $(G,\sigma_\e)$.

 \bigskip

 Our approach to the existence of global (in time) smooth solutions is based on a Riemannian approximation scheme. 
We  study  graph solutions of the mean curvature flow
in the Riemannian spaces $(G,\sigma_\e)$ where $G$ is a  group
and $\sigma_\e$ is a family of  Riemannian metrics that 'collapse'  as $\e\to 0$ to a sub-Riemannian metric $\s_0$ in $G$. 

Our results are analogue to those we proved for the  total variation flow proved in \cite{CCM3}. 
The main difference is that we remove here the assumption that the group $G$ is a Carnot group, i.e. we also consider non-nilpotent groups such as the group of rigid Euclidean motions $\mathcal {RT}$. In fact the results in the present paper yield, with minor modifications of the proof, the regularity and long time existence of the total variation flow in the same extended class of groups. The main new technical  challenge that distinguishes the study of the total variation flow from the mean curvature flow  is that  the equation studied here is not in divergence form, which makes it necessary to  completely change the proof of the
$C^{1, \alpha}$ regularity.

\subsection{Lie  group structure}
Let $G$ be an analytic and simply connected Lie group with
topological dimension $n$.

A subRiemannian manifold on $G$ is a triplet $(G;\Delta; \sigma_0)$ where $\Delta$ denotes a left invariant bracket-generating subbundle of $TM$,  and $\sigma_0$ is a positive definite smooth, bilinear form on $\Delta$, see for instance
Montgomery
\cite{MONTGOMERY}. 
We fix a orthonormal horizontal basis  $X_{1},\ldots,X_{m}$ of $\Delta$.   
 We will say that the group has step 2 if $\{X_i\}_{i=1, \ldots, m} \cup \{ [X_i, X_j]\}_{i,j=1, \ldots, m}$ span the whole 
tangent space at every point. If in addition the vector fields 
$$\{X_i\}_{i=1, \ldots, m} \text{ and } \{[X_i, X_j]\}_{i,j=1, \ldots, m}$$ 
are linearly independent we say that the group is free up to step 2. 
We can complete $X_{1},\ldots,X_{m}$  to a basis $(X_1,\ldots,X_{n})$ of $\lie$, 
choosing a basis  of the second layer of the tangent space. We denote by $(X_1,\ldots,X_{n})$ (resp. $(X^r_1,\ldots,
X^r_{n})$) the left invariant (resp. right invariant) translations of
the frames $(X_1,\ldots,X_{n})$.
We will say that the vector fields $X_{1},\ldots,X_{m}$  have degree 1 and denote $d(X_i) =1$ 
while their commutators have  degree 2. Throughout the paper we will assume for simplicity that the horizontal frame above is self-adjoint.
%The degree is well defined up to step 2.  

As prototypes for this class of spaces we highlight the following:
\begin{itemize}
\item 
The standard example for such families is the Heisenberg group $\Hone$. This is a Lie group whose underlying manifold is $\R^3$ and is endowed with a group law $$(x_1,x_2,x_3)(y_1,y_2,y_3)=(x_1+y_1, x_2+y_2, x_3+y_3-(x_2y_1-x_1y_2)).$$ With respect to such law one has that the vector fields $$X_1=\p_{x_1}-x_2 \p_{x_3}\text{ and }X_2=\p_{x_2}+x_1\p_{x_3}$$ are left-invariant. Together with their commutator $[X_1,X_2]=2\p_{x_3}$ they yield a basis of $\R^3$. 
\item A second example is given by the classical group of rigid motions of the plane, also known as the {\it roto-translation} group $\mathcal {RT}$.  This is a Lie group with underlying manifold $\R^2\times S^1$
and a group law $(x_1,x_2,\theta_1)(y_1,y_2,\theta_2)=(x_1+y_1\cos \theta-y_2\sin\theta, x_2+y_1\sin \theta+y_2\cos\theta, \theta_1+\theta_2)$.  The horizontal distribution is given by 
$$\Delta=span\{ X_1,X_2\}, \text{ with }X_1=\cos\theta\p_{x_1}+\sin\theta \p_{x_2}, \text{ and } X_2=\p_\theta.$$ The subRiemannian metric $\sigma_0$ is defined so that $X_1$ and $X_2$ form a orthonormal basis. 
Note that $X_1$, $X_2$ and $[X_1, X_2] = X_3=-\sin\theta \p_{x_1}+\cos\theta \p_{x_2}$
span the tangent space at every point. \end{itemize}

The assumption that $\Delta $ is bracket generating,  allows to use the results from
\cite{nsw},  and define a control distance $d_0(x,y)$
 associated to the distribution $X_{1},\ldots, X_{m}$, which is
called {\sl the
 Carnot-Carath\'eodory metric} (denote by $d_{r,0}$
 the corresponding right invariant distance).  
  We let
$\nabla_0=(X_1,\ldots,X_m)$ denote the {\it horizontal gradient}
operator. If $\phi\in C^{\infty}(G)$ we set $\nabla_0 \phi=\sum_{i=1}^m X_i\phi X_i$ and $|\nabla_0 \phi|^2=\sum_{i=1}^m (X_i \phi )^2$.

We define a family of left invariant Riemannian metrics $\s_{\e}$,
$\e>0$ in $\lie$ by requesting that  $$\{X_1^\e,\ldots,X_n^\e\}:=\{X_1,\ldots, X_m, \e
X_{m+1}
 ,\ldots, \e X_n\}$$ is an orthonormal
frame. We will denote by $d_{\e}$ the corresponding distance
functions. Correspondingly we use $\nabla_\e,$ (resp. $
\nabla^r_\e$) to denote the left (resp. right) invariant gradients. In particular,
if  $\phi\in C^{\infty}(G)$ we set $\nabla_\e \phi=\sum_{i=1}^n X_i^\e\phi X_i^\e$ and  $|\nabla_\e \phi|^2=\sum_{i=1}^n (X_i^\e \phi )^2$.

\bigskip

Our results rest in a crucial way on the use of the celebrated Rothschild and Stein approximation theorem \cite{Roth:Stein}. Let  $X_1,\ldots,X_m$ be a bracket generating family of smooth vector fields, free up to step $2$, denote by $X_1,...,X_n$ its completion to a basis of of the tangent bundle $TG$ and set for every $u\in \R^n$,
\begin{equation}\label{phi0}
\Phi_{x}(u) = exp\Big(\sum_{i=1}^n   u_i  X_{i} \Big)(x).\end{equation}
We will use the following special case of the Rothschild-Stein osculating theorem \cite[Theorem 5]{Roth:Stein}, to approximate a neighborhood of any point in $G$ with a a neighborhood of the identity in the free group  $G_{m,2}$  with $m$ generators $Y_1,...,Y_m$ and step $2$. 

\begin{thrm}\label{rs-5}
Let  $X_1,\ldots,X_m$ be a family of smooth vector fields in $G$ that
are free up to rank $2$ at every point, as defined above.  Let  $G_{m,2}$ be the free Lie group of step two, with $m$ generators $Y_1,...,Y_m$ and set $Y_1,...,Y_n$ to be the  basis obtained by the original generators and their commutators.
For every $x\in G$ there exists a neighborhood $V$ of $x$ and a neighborhood $U$
of the identity   in $G_{m,2}$  such that:
\begin{enumerate}
\item[(a)] the map $\Phi_{x}:
U\to V $ 
is a diffeomorphism onto its image. We will denote by $\Theta_x$ its inverse map. Here we have denoted points in $U$ by their coordinates $(u_1,...,u_n)$ in the basis $Y_1,...,Y_n$.
\item[(b)]we have
\begin{equation}\label{split}
d \Theta_x (X_i)= Y_i+R_i, \ \ i=1,\ldots,m
\end{equation}
%i.e.,
%$$
%X_i \bigg(\bar f \circ\Theta_x  \bigg) = ( Y_i \bar f + R_i \bar f)
%\circ\Theta_x
%$$
%for $\bar f\in C^\infty(U)$, 
where $R_i$ is a vector field  of local degree less or equal than
zero, depending smoothly on $x.$
\end{enumerate}
In view of \cite[(14.5)]{Roth:Stein},  the operator $R_i$ is represented as
$$R_i= \sum_{ h=1}^n \sigma_{i h}(u) X_h,$$
where each  $\sigma_{i h}$ has a a Taylor expansion of homogeneous functions of degree larger or equal than    $d(X_h).$
%\red{ (14.5) del RS $\sigma_{i h}$ is an homogeneous polynomial of degree great or equal than  $d(X_h).$}
\end{thrm}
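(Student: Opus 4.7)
The plan is to prove the two parts separately. Part (a) is a direct inverse function theorem argument, while part (b) is the core of Rothschild--Stein: one expands the pushforward of the $X_i$'s via the Baker--Campbell--Hausdorff (BCH) formula in the exponential chart $\Phi_x$ and matches the leading terms against the frame $Y_i$ on the universal free step-$2$ Lie group $G_{m,2}$.

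For (a), compute $\Phi_x(0)=x$ and $\tfrac{d}{dt}\big|_{t=0}\Phi_x(te_i)=\tfrac{d}{dt}\big|_{t=0}\exp(tX_i)(x)=X_i(x)$, so $d\Phi_x|_0$ sends the standard basis of $T_0 U \cong \R^n$ to $\{X_i(x)\}_{i=1}^n$. By the freeness up to step $2$ this is a basis of $T_xG$, hence $d\Phi_x|_0$ is a linear isomorphism, and the inverse function theorem supplies a diffeomorphism $\Phi_x\colon U\to V$ with inverse $\Theta_x$.

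For (b), equip $G_{m,2}$ with the anisotropic dilations $\delta_\lambda(u_1,\ldots,u_n)=(\lambda u_1,\ldots,\lambda u_m,\lambda^2 u_{m+1},\ldots,\lambda^2 u_n)$, assigning weight $d(Y_h)$ to each coordinate $u_h$; this is the grading underlying the Rothschild--Stein notion of local degree. Since $G_{m,2}$ is nilpotent of step $2$, BCH there truncates after the term $\tfrac12[A,B]$ and the fields $Y_i$ admit a polynomial closed form in these coordinates, which serves as the target principal part. In the chart $\Phi_x$, write
\[
\exp(tX_i)\cdot \exp\Bigl(\sum_j u_j X_j\Bigr)(x)=\exp\bigl(F_i(t,u)\bigr)(x)
\]
via BCH on $G$ and compute $(\Theta_x)_*(X_i)(u)=\partial_t F_i(t,u)\big|_{t=0}$. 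A BCH contribution with one factor $A=tX_i$ and $\ell$ factors $B=\sum_j u_jX_j$ differentiates into a length-$(1+\ell)$ iterated bracket of the $X_j$'s, multiplied by a scalar polynomial in $u$ of weighted degree $\geq \ell$. The length-$1$ and length-$2$ contributions depend only on the $X_i$ and the first commutators $[X_j,X_k]$; by the freeness up to step $2$ these satisfy at $x$ the same linear relations as the corresponding elements of $\lig g_{m,2}$, so their contribution is identically the expression for $Y_i$. Every iterated bracket of length $\geq 3$ is expressible in the basis $X_1,\ldots,X_n$ (whose elements have degree $\leq 2$), and the accompanying polynomial has weighted degree $\geq 2$, so its overall contribution has local degree $\leq 0$ and is absorbed into $R_i$. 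Rewriting each $\partial_{u_h}$ back in terms of $X_h$ via the invertible differential $d\Phi_x$ yields the representation $R_i=\sum_h \sigma_{ih}(u)X_h$ with $\sigma_{ih}$ of weighted Taylor degree $\geq d(X_h)$.

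The main obstacle is precisely the non-nilpotency of $G$: the BCH series does not terminate, so one must control an a priori infinite tail of iterated-bracket contributions. The key observation that makes this control automatic is the weighted-degree bookkeeping above, which couples the length of each iterated bracket to the minimal weighted degree of the polynomial in $u$ that multiplies it. Once this coupling is in hand, every bracket of length $\geq 3$ is forced to have local degree $\leq 0$, the truncation after length $2$ at the level of local degree becomes automatic, and the residual verification is routine.
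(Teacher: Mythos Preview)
The paper does not give its own proof of this statement: it is quoted as a special case of \cite[Theorem~5]{Roth:Stein}, with the representation of $R_i$ attributed to \cite[(14.5)]{Roth:Stein}. There is therefore nothing in the paper to compare your argument against; you are reconstructing (a sketch of) the original Rothschild--Stein proof.

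Your outline is the right one and captures the essential mechanism: inverse function theorem for (a), BCH expansion in the exponential chart plus weighted-degree bookkeeping for (b). Two points deserve tightening. First, the order in your BCH identity is reversed. Since the $X_i$ are left-invariant, the time-$t$ flow of $X_i$ through $\Phi_x(u)=x\cdot\exp_G\bigl(\sum_j u_jX_j\bigr)$ is $x\cdot\exp_G\bigl(\sum_j u_jX_j\bigr)\cdot\exp_G(tX_i)$, so one must apply BCH with $A=\sum_j u_jX_j$ and $B=tX_i$, not the other way around. This only changes signs in the commutator terms and does not affect the degree count, but as written your formula computes the pushforward of a right-invariant field. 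Second, the sentence ``their contribution is identically the expression for $Y_i$'' hides a nontrivial step: the length-$1$ and length-$2$ BCH terms are linear combinations of $X_1,\ldots,X_n$, not of $\partial_{u_1},\ldots,\partial_{u_n}$, and matching them with the polynomial expression for $Y_i$ on $G_{m,2}$ requires first passing to the coordinate basis via $d\Phi_x$ and then checking that the discrepancy is itself of local degree $\leq 0$. This is exactly where the ``free up to step $2$'' hypothesis enters (it guarantees the structure constants of the first layer of brackets agree with those of $\lig g_{m,2}$), and it is where most of the work in \cite{Roth:Stein} actually lies. Your final paragraph correctly identifies the tail control as automatic once the degree bookkeeping is set up, but the bookkeeping itself needs the change of basis made explicit to be a proof rather than a plan.
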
 \index{Rothschild-Stein osculating theorem}

\subsection{The mean curvature flow}

If a surface $M$ is represented as a 0-level set of a function $f$, the points where the horizontal gradient of the defining function 
does not vanish are called non characteristic. At these points 
several equivalent definitions of the  horizontal mean curvature $h_0$ have been proposed. 
To quote a few: $h_0$ can be defined in terms of the first variation of the area functional
 \cite{CHMY, CHY, dgn:minimal, pau:cmc-carnot, RR1, cdpt:survey}, as horizontal divergence of the horizontal unit normal
 or as limit of the mean curvatures $h_\e$ of suitable Riemannian approximating {metrics} $\sigma_\e$
 \cite{cdpt:survey}.
If the surface is not regular, the notion of curvature can be expressed in the viscosity sense (we refer to  \cite{bieske}, \cite{bieske2},
\cite{wang:aronsson}, \cite{wang:convex}, \cite{lms},
\cite{baloghrickly}, \cite{magnani:convex},
\cite{CC} for viscosity solutions of PDE in the sub-Riemannian setting).

The mean curvature flow of a graph in $G\times \R$ is characterized by the fact that each point of the evolving  graph moves in the direction of the  {\it upward} unit normal with speed equal to the mean curvature. In the setting of the approximating Riemannian metrics $(G,\s_\e)$ this flow is smooth and in terms of the functions $t\to u_\e(\cdot,t):\Om\subset G \to \R$ describing the evolving graphs, the relevant equation reads:

\begin{equation}\label{pdee}\frac{\p u_{\e}}{\p t} =W_\e h_{\e}=W_\e\sum_{i=1}^n X_i^{\e}\Big(\frac{X_i^{\e}u_{\e}}{W_{\e} }\Big)=\sum_{i,j=1}^na_{ij}^\e (\nabla_\e u_\e) X_i^\e X_j^\e u_\e\quad \text{ for }x\in \Om, \; t>0, 
\end{equation}
where, $h_\e$ is the mean curvature of the graph of $u_\e (\cdot, t)$ and 
\begin{equation}\label{defaij} W_\e^2=1+|\nabla_\e u_\e|^2=
1+\sum_{i=1}^n (X_i^\e u_\e)^2 \text{ and } a_{ij}^\e(\xi)=  \delta_{ij}-\frac{\xi_i \xi_j}{1+|\xi|^2} ,
\end{equation}
 for all $\xi\in \R^n$.
In the sub-Riemannian limit $\e=0$ the equation reads
\begin{equation}\label{pde01}\frac{\p u}{\p t} =\sqrt{1+|\nabla_0 u|^2}\sum_{i=1}^m X_i  \Big(\frac{X_iu}{\sqrt{1+|\nabla_0 u|^2}}\Big),
\end{equation} with $u:=\lim_{\e\to 0} u_\e$, for $x\in \Om$ and $t>0$.
{Mean curvature flow in the setting of Carnot group has been studied by \cite{CC} and \cite{ccs}. See also the recent \cite{Dragoni} as well as \cite{Manfredi} for a probabilistic interpretation of the flow.}

\medskip
{\it The aim of this paper is to establish uniform (in the parameter $\e$ as $\e\to 0$)  estimates and determine  the asymptotic behavior of solutions
to the initial value problem for the mean curvature motion of graphs}
over bounded domains of a group $G$,
 \begin{equation}\label{ivp}
 \Bigg\{
 \begin{array}{ll}
 \p_t u_\e= h_\e W_\e  &\text{ in }Q=\Om\times(0,T) \\
 u_\e=\varphi &\text{ on } \p_p Q.
 \end{array}
 \end{equation}
 Here $\p_p Q=(\Om\times \{t=0\})\cup (\p\Om \times (0,T))$ denotes the parabolic boundary of $Q$.

The classical parabolic theory yields local existence and uniqueness for smooth solutions $u_\e$ of \eqref{ivp} of the problem, under suitable assumptions on the boundary data. Our main goal consists in proving that estimates are stable as $\epsilon$ tends to $0$, thus providing 
estimates also for the solution of the limit problem.
  
  Our first result consists in showing that if the initial/boundary data is sufficiently smooth then the solutions of \eqref{ivp} are Lipschitz up to the boundary uniformly in $\e>0$.
  
\begin{thrm}{(Global gradient bounds)}\label{global estimates}
Let $G$ be a Lie group of step two,
 $\Om\subset G$ a bounded, open, convex  set(in the sense of definition \ref{defconvex} below)  and $\varphi\in C^2(\bar{\Om})$. For $1\ge\e>0$ denote by  $u_\e\in C^2(\Om\times(0,T))\cap C^1(\bar\Om\times(0,T))$ the non-negative unique  solution of the initial value problem \eqref{ivp}. There exists $C=C(G, ||\varphi||_{C^2(\bar\Om)})>0$ such that
 \begin{equation}\label{b-1.1a}
 %\sup_{\bar\Om\times(0,T)}|\nabla_\e u_\e| \le 
  \sup_{\bar\Om\times(0,T)}|\nabla_1 u_\e| \le C.
 \end{equation}
 In particular, since $\sup_{\bar\Om\times(0,T)}|\nabla_\e u_\e| \le  \sup_{\bar\Om\times(0,T)}|\nabla_1 u_\e| $ one has uniform Lipschitz bounds for $u_\e$.
 \end{thrm}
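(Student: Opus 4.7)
The plan is to follow the classical pattern for quasilinear parabolic equations: first bound $u_\epsilon$ and $\partial_t u_\epsilon$ in $L^\infty$ via the maximum principle, then use barriers near $\partial\Omega$ to obtain a boundary gradient bound, and finally propagate this bound to the interior by a maximum-principle argument on a gradient-type auxiliary quantity. Throughout, all constants must be tracked carefully so that they remain independent of $\epsilon\in(0,1]$.

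For the first step, $\min_{\bar\Omega}\varphi\le u_\epsilon\le\max_{\bar\Omega}\varphi$ follows by comparison with constant stationary solutions. Since \eqref{pdee} is autonomous in $t$ and the coefficients $a_{ij}^\epsilon$ depend only on $\nabla_\epsilon u_\epsilon$, the time derivative $v_\epsilon:=\partial_t u_\epsilon$ satisfies a linear parabolic equation with no zero-order term, obtained by formally differentiating in $t$. The weak maximum principle gives $\sup_Q|v_\epsilon|\le\max_{\partial_p Q}|v_\epsilon|$. On the lateral boundary $v_\epsilon=\partial_t\varphi\equiv 0$; on $\Omega\times\{0\}$ the initial value $\partial_t u_\epsilon=W_\epsilon(\nabla_\epsilon\varphi)h_\epsilon(\varphi)$ is controlled by $\|\varphi\|_{C^2(\bar\Omega)}$ together with the uniform pointwise bound on $a_{ij}^\epsilon$.

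For the boundary gradient estimate, fix $x_0\in\partial\Omega$ and construct barriers $w^\pm(x,t)=\varphi(x)\pm M\psi(\delta(x))$, where $\delta$ is a smooth defining function for $\partial\Omega$ near $x_0$ (positive in $\Omega$), $\psi(s)=k^{-1}\log(1+ks)$ for a suitable $k>0$, and $M$ is a large constant. The horizontal convexity hypothesis on $\Omega$ controls the sign of the second-order terms $X_i^\epsilon X_j^\epsilon\delta$, while the bound on $\partial_t u_\epsilon$ from the previous step absorbs the time derivative of the barrier. Choosing $M$ large in terms of $\|\varphi\|_{C^2(\bar\Omega)}$ and the bounds above, $w^+$ becomes a supersolution and $w^-$ a subsolution of \eqref{ivp} in a one-sided neighborhood of $x_0$ in $\Omega$, each dominating or dominated by $u_\epsilon$ on the parabolic boundary. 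Since $w^\pm(x_0,t)=\varphi(x_0)=u_\epsilon(x_0,t)$, the comparison principle yields $|\nabla_1 u_\epsilon(x_0,t)|\le C$, uniformly in $\epsilon$.

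To globalize, I would apply a Bernstein-type argument. Differentiating \eqref{pdee} along each $X_k^\epsilon$, multiplying by $X_k^\epsilon u_\epsilon$ and summing in $k$, one obtains an evolution inequality for $W_\epsilon^2=1+|\nabla_\epsilon u_\epsilon|^2$ of the schematic form
$$\Bigl(\partial_t-\sum_{i,j}a_{ij}^\epsilon X_i^\epsilon X_j^\epsilon\Bigr)W_\epsilon^2 \le C\bigl(1+|\nabla_\epsilon u_\epsilon|^2\bigr) + \text{(commutator terms)}.$$
The commutators $[X_i^\epsilon,X_j^\epsilon]$ appearing here are the main obstacle: naively they carry factors of $\epsilon^{-1}$ from the rescaling in the frame $\{X_1,\dots,X_m,\epsilon X_{m+1},\dots,\epsilon X_n\}$. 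To keep the estimate uniform as $\epsilon\to 0$ one has to exploit the step-two structure of the Lie algebra together with the self-adjointness of the horizontal frame; in the general (not necessarily free) case one transports the computation to the model free group $G_{m,2}$ via Theorem~\ref{rs-5}, using that the error vector fields $R_i$ there are of non-positive local degree. Once the bad commutator contributions are absorbed into the good elliptic and lower-order terms, the weak maximum principle applied to $W_\epsilon^2$ (and then to the remaining components $(X_i u_\epsilon)^2$, $i>m$, making up $|\nabla_1 u_\epsilon|^2$) transfers the boundary bound from the previous step to the interior, completing the proof.
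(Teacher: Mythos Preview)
Your outline for the $L^\infty$ and $\partial_t u_\epsilon$ bounds, and the general barrier strategy at the boundary, is aligned with the paper.  The essential divergence is in your third step, the interior gradient estimate, and there the proposal has a real gap.

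You differentiate \eqref{pdee} along the \emph{left}-invariant frame $X_k^\epsilon$ and then try to run a Bernstein argument on $W_\epsilon^2$.  As you yourself note, this produces commutator terms $[X_i^\epsilon,X_j^\epsilon]$ which, for $i,j\le m$, equal $\epsilon^{-1}\sum_k c_{ij}^k X_k^\epsilon$ and blow up as $\epsilon\to 0$.  Your remedy --- ``exploit the step-two structure together with the self-adjointness of the horizontal frame'' and ``transport to $G_{m,2}$ via Theorem~\ref{rs-5}'' --- is not an argument: nothing in those statements produces the cancellation you need, and in fact the paper explicitly remarks (after Lemma~\ref{derivatedestre}) that differentiating along the left-invariant horizontal frame does \emph{not} work here precisely because those vectors do not commute.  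So this step, as written, does not close.

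The paper's device is to differentiate instead along the \emph{right}-invariant frame $X_k^r$.  Since right- and left-invariant vector fields on a Lie group commute, each $v_k=X_k^r u_\epsilon$ (and $v_0=\partial_t u_\epsilon$) satisfies the clean linearized equation \eqref{diff-eq} with no commutator error at all; the weak maximum principle then gives $\sup_K|\nabla_1 u_\epsilon|\le \sup_{\partial_p Q}(|\nabla_1 u_\epsilon|+|\partial_t u_\epsilon|)$ directly (Proposition~\ref{u_t}), and one passes from right to left derivatives via the local relation $X_k^1=\sum_j c_{kj}X_j^{1,r}$ of \eqref{lr}.  This is the missing idea.  The Rothschild--Stein approximation is not used for the interior bound; it enters only in the barrier construction (Lemma~\ref{barrier-lemma}), where the linear function $\Pi$ on the osculating free group has $Y_i^\epsilon Y_j^\epsilon\Pi$ antisymmetric (Lemma~\ref{planes are flat}), which is what actually kills the principal term --- a sharper mechanism than the ``horizontal convexity controls the sign of $X_i^\epsilon X_j^\epsilon\delta$'' you invoke.
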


Having established Lipschitz bounds, the next step is to recognize that right  derivatives $X^r_i u_\e$ of the solutions of \eqref{ivp} are solutions of  a divergence form, degenerate parabolic PDE. We prove that   weak solutions of such PDE satisfy a Harnack inequality and consequently  obtain $C^{1,\alpha}$ interior estimates for the original solution $u_\e$, which are uniform in $\e>0$. 

At this point one rewrites the PDE in \eqref{ivp} in  non-divergence form and invokes the {\it stable}
Schauder estimates for subriemannian equations (see \cite{CCM3}, \cite{CCstein}) to prove local higher regularity and long time existence.

\begin{thrm} \label{global in time  existence results}
In the hypothesis of Theorem \ref{global estimates}  there exists a unique solution
$u_\e \in C^{\infty}(\Om\times (0,\infty))\cap L^\infty((0,\infty),C^1(\bar \Om))$ of the initial value problem
\begin{equation}\label{ivp-inf}
 \Bigg\{
 \begin{array}{ll}
 \p_t u_\e= W_\e h_\e    &\text{ in }Q=\Om\times(0,\infty) \\
 u_\e=\varphi &\text{ on } \p_p Q 
 \end{array}
 \end{equation}
and that for each $k\in \N$ there exists $C_k=C_k(G,\varphi,k,\Om)>0$ not depending on $\e$ such that
\begin{equation}\label{stable estimates}
||u_\e||_{C^k(Q)} \le C_k.
\end{equation}
\end{thrm}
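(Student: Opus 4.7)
The plan is to build on Theorem \ref{global estimates} and bootstrap from Lipschitz to $C^{k,\alpha}$ bounds uniformly in $\epsilon$, then extend the local classical solution globally in time by a standard continuation argument.

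First I would fix $\epsilon>0$ and differentiate \eqref{pdee} along the right-invariant vector fields $X^r_i$. Since right-invariant and left-invariant vector fields commute, and applying the chain rule to the coefficient $a_{ij}^\epsilon(\nabla_\epsilon u_\epsilon)$, the functions $v^i_\epsilon := X^r_i u_\epsilon$ satisfy a linear parabolic equation of the form $\p_t v = \sum_{k,l} X^\epsilon_k\bigl(A^\epsilon_{kl}(x,t)\, X^\epsilon_l v\bigr)$, which is divergence-form with respect to the Riemannian frame $\{X_i^\epsilon\}$. The symmetric matrix $A^\epsilon_{kl}$ is elliptic with bounds depending only on the Lipschitz norm furnished by Theorem \ref{global estimates}, which is itself independent of $\epsilon$. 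Hence we obtain a linear degenerate parabolic equation whose ellipticity constants do not deteriorate as $\epsilon\to 0$.

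The next step is a uniform (in $\epsilon$) Harnack inequality for non-negative weak solutions of this divergence-form equation. This is where I expect the main technical difficulty: one must run the De Giorgi--Nash--Moser iteration with respect to the Riemannian metrics $\sigma_\epsilon$ and verify that all the ingredients (volume doubling, Poincar\'e inequality, Sobolev embedding, and the resulting Harnack constant) depend on the free step-two structure of $G$ but not on $\epsilon$. Stability of these ingredients as $\epsilon\to 0$ is the crux; the Rothschild--Stein approximation Theorem \ref{rs-5} is the natural tool for transferring estimates from the free nilpotent model group $G_{m,2}$ to $G$ in an $\epsilon$-stable fashion. Once this is secured, the standard iteration yields oscillation decay for $v^i_\epsilon$ and hence interior $C^{1,\alpha}$ bounds for $u_\epsilon$, uniform in $\epsilon$.

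At this point the coefficient $a_{ij}^\epsilon(\nabla_\epsilon u_\epsilon)$ in \eqref{pdee} becomes a H\"older continuous, uniformly elliptic function of $(x,t)$ with norms independent of $\epsilon$. The \emph{stable} Schauder estimates for subRiemannian equations developed in \cite{CCM3} and \cite{CCstein} then deliver uniform $C^{2,\alpha}$ interior bounds. Iterating the procedure by differentiating \eqref{pdee} along $X^r_i$ repeatedly (each such derivative solves a linear equation whose coefficients inherit the regularity already obtained) and reapplying the stable Schauder theory produces the full scale of $C^{k,\alpha}$ estimates \eqref{stable estimates}, uniform in $\epsilon$. Global existence in time then follows by the continuation method: the classical quasilinear parabolic theory yields a unique smooth solution on a maximal interval $[0,T^*)$, and the uniform $C^k$ bounds rule out blow-up of any derivative, forcing $T^*=\infty$. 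Uniqueness is a standard consequence of the comparison principle for the quasilinear equation.
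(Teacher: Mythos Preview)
Your proposal contains a genuine gap at the very first step, and it is precisely the point that distinguishes this paper from the earlier total variation flow work \cite{CCM3}. You assert that $v^i_\e:=X_i^r u_\e$ satisfies a \emph{purely} divergence-form linear equation $\p_t v=\sum X_k^\e(A^\e_{kl}X_l^\e v)$ with coefficients controlled by the Lipschitz norm of $u_\e$. That would be true for the total variation flow, whose right-hand side is an honest divergence. But \eqref{pdee} carries the factor $W_\e$ outside the divergence; in non-divergence form it reads $\p_t u_\e=a_{ij}^\e(\nabla_\e u_\e)X_i^\e X_j^\e u_\e$, and differentiating along $X_k^r$ gives (this is \eqref{RG-1} in the paper)
\[
\p_t v_k=\sum_{i,j} X_i^\e\bigl(a_{ij}^\e X_j^\e v_k\bigr)+\sum_{i,j,h} a^{ijh}\,X_i^\e X_j^\e u_\e\,X_h^\e v_k,
\qquad a^{ijh}=\partial_{p_h}a_{ij}^\e-\partial_{p_j}a_{ih}^\e.
\]
A direct computation shows $a^{ijh}\not\equiv 0$ for the mean curvature coefficients. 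The drift above thus has coefficients $X_i^\e X_j^\e u_\e$, i.e.\ \emph{second} derivatives of $u_\e$, which are not bounded by the Lipschitz estimate from Theorem~\ref{global estimates}. Consequently you cannot plug $v_k$ into a De~Giorgi--Nash--Moser/Harnack machine with $\e$-independent structural constants, and your route to $C^{1,\alpha}$ fails at this point.

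The paper circumvents this via a Trudinger--Lieberman device: one couples $v_k$ with $z=|\nabla_\e^r u_\e|^2$ by setting $w_k^\pm=\pm v_k+\delta z$. The equation for $z$ produces a favorable term $2\delta\sum a_{ij}^\e X_i^\e v_s X_j^\e v_s$ that absorbs the bad second-derivative coefficients, leaving only a differential inequality with a quadratic gradient term on the right (Lemma~\ref{remarkstep1}). An exponential change of variable removes that quadratic term, after which the \emph{weak} Harnack inequality for supersolutions from \cite{CCstein} yields oscillation decay for $v_k$ and hence uniform interior $C^{1,\alpha}$ bounds. From that point on your outline (stable Schauder estimates from \cite{CCM3,CCstein}, bootstrap by repeated differentiation, continuation to $T^*=\infty$, uniqueness by comparison) matches the paper's.
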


\bigskip

Since the estimates are uniform in $\epsilon$ and in time,
and with respect to $\epsilon$, we will deduce the following corollary:

\begin{cor} 
Under the assumptions of the Theorem \ref{global estimates},
as $\e\to 0$ 
the solutions $u_\e$ converge uniformly (with all its derivatives) on compact subsets of $Q$ to the unique,  smooth solution 
$u_0\in C^{\infty}(\Om\times (0,\infty))\cap L^\infty((0,\infty),C^1(\bar \Om))$ of the sub-Riemannian mean curvature flow  \eqref{pde01} in $\Om\times (0,\infty)$ with initial data $\varphi$.
\end{cor}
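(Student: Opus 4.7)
The plan is to combine the $\e$-uniform bounds of Theorem \ref{global in time  existence results} with a diagonal extraction, pass to the limit in the PDE, and then invoke a uniqueness result for the limit problem to upgrade subsequential convergence to convergence of the whole family.

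First I would use \eqref{stable estimates}: for each $k\in\N$ we have $\|u_\e\|_{C^k(Q)}\le C_k$ with $C_k$ independent of $\e\in(0,1]$. Arzel\`a--Ascoli together with a diagonal extraction then yields a sequence $\e_j\to 0$ and a limit $u_0\in C^\infty(\Om\times(0,\infty))\cap L^\infty((0,\infty),C^1(\bar\Om))$ such that $u_{\e_j}\to u_0$, together with all its derivatives, uniformly on every compact subset of $Q$. The uniform $C^1$ control up to the parabolic boundary provided by Theorem \ref{global estimates} forces $u_0=\varphi$ on $\p_p Q$.

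Next I would pass to the limit in \eqref{pdee}. Since $X_i^\e=X_i$ for $i\le m$ and $X_i^\e=\e X_i$ for $i\ge m+1$, one has $|\nabla_\e u_\e|^2=|\nabla_0 u_\e|^2+\e^2\sum_{i>m}(X_i u_\e)^2$, which converges locally uniformly to $|\nabla_0 u_0|^2$. A direct computation using \eqref{defaij} shows that all contributions to $\sum_{i,j=1}^n a_{ij}^\e(\nabla_\e u_\e)X_i^\e X_j^\e u_\e$ with either $i$ or $j$ exceeding $m$ carry an extra factor of $\e$ from $X_i^\e$ or $X_j^\e$, while the derivatives $X_\ell X_h u_\e$ remain bounded by \eqref{stable estimates}; hence those terms vanish in the limit. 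The surviving horizontal terms reassemble into
\[
\p_t u_0 = \sqrt{1+|\nabla_0 u_0|^2}\,\sum_{i=1}^m X_i\!\left(\frac{X_i u_0}{\sqrt{1+|\nabla_0 u_0|^2}}\right),
\]
which is exactly \eqref{pde01}.

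The hard part will be proving that the limit $u_0$ is \emph{unique}, so that the full family $\{u_\e\}_{\e>0}$, and not only the extracted subsequence, converges. For this I would appeal to a comparison principle for classical (or viscosity) solutions of the degenerate parabolic operator in \eqref{pde01}, which is available in the sub-Riemannian setting through the references \cite{CC,ccs,Dragoni} cited in the introduction; the convexity of $\Om$ assumed in Theorem \ref{global estimates} makes the boundary data compatible with such a comparison argument. Once uniqueness is in place, the standard ``every subsequence has a further subsequence converging to the same limit'' argument promotes the extracted convergence to convergence of the whole family $u_\e\to u_0$, in every derivative and on every compact subset of $Q$, which is the content of the corollary.
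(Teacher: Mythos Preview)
Your proposal is correct and follows exactly the line the paper has in mind: the corollary is stated there as an immediate consequence of the uniform-in-$\e$ estimates \eqref{stable estimates}, without any further argument. Your Arzel\`a--Ascoli/diagonal extraction, the term-by-term passage to the limit in \eqref{pdee}, and the appeal to a comparison principle for the $\e=0$ equation (available in the paper as Lemma \ref{ESth32bis}, stated for all $\e\ge 0$) supply precisely the standard details the paper leaves to the reader.
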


\begin{cor} 
Under the assumptions of Theorem \ref{global estimates},
as $T\to \infty$
the solutions $u_\e(\cdot, t)$ converge uniformly on compact subsets of $\Om$ to the unique  solution
of the minimal surface equation $$h_\e=0\quad in \; \Om$$ with boundary value $\varphi$,
while $u_0=\lim_{\e\to 0} u_\e\in C^\infty (\Om)\cap Lip(\bar \Om)$ is  the unique 
%Lipschitz viscosity 
solution of the
{\it sub-Riemannian minimal surfaces} equation
$h_0=0$  in $\Om$, with boundary data $\varphi$.
\end{cor}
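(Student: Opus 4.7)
The plan is to combine the uniform-in-$\e$ smooth bounds of Theorem \ref{global in time  existence results} with the monotonicity of the Riemannian area functional along the flow. Along a smooth solution $u_\e$ of \eqref{ivp-inf} one has the standard identity
\begin{equation*}
\frac{d}{dt}\int_\Om W_\e\,dx = -\int_\Om W_\e\,h_\e^2\,dx \le 0,
\end{equation*}
so $t\mapsto \int_\Om W_\e(\cdot,t)\,dx$ is nonincreasing and bounded below by $|\Om|$. Integrating in time gives
\begin{equation*}
\int_0^\infty\!\!\int_\Om W_\e\,h_\e^2\,dx\,dt <\infty,
\end{equation*}
so one can extract a sequence $t_n\to\infty$ along which $\int_\Om W_\e h_\e^2\,dx\to 0$. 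By \eqref{stable estimates} the family $\{u_\e(\cdot,t)\}_{t\ge 0}$ is uniformly bounded in $C^k(\overline{\Om'})$ for every $\Om'\Subset\Om$ and every $k$, and uniformly Lipschitz up to $\p\Om$. Arzel\`a--Ascoli together with a diagonal argument yields a limit $v_\e\in C^\infty(\Om)\cap\Lip(\bar\Om)$ with $u_\e(\cdot,t_n)\to v_\e$ locally smoothly, and the decay of $\int W_\e h_\e^2\,dx$ forces $h_\e(v_\e)=0$ in $\Om$, while the Lipschitz bound up to the boundary preserves $v_\e=\varphi$ on $\p\Om$.

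Next I would invoke uniqueness of the Dirichlet problem for $h_\e=0$. In the Riemannian range $\e>0$ this is classical: the area functional $u\mapsto \int_\Om W_\e\,dx$ is strictly convex on graphs, and the maximum principle (or a standard comparison argument) forces any two solutions with the same boundary data to coincide. Since every subsequential limit of $u_\e(\cdot,t)$ must coincide with $v_\e$, the full family converges, $u_\e(\cdot,t)\to v_\e$ uniformly on compact subsets of $\Om$ as $t\to\infty$. Moreover, because the bounds in \eqref{stable estimates} are uniform in $\e$ and in $t$, the same bounds are inherited by $v_\e$. Hence $\{v_\e\}_{\e>0}$ is precompact in $C^\infty_{\mathrm{loc}}(\Om)$ and equi-Lipschitz on $\bar\Om$, and any limit $v_0=\lim_{\e_k\to 0}v_{\e_k}$ satisfies $h_0(v_0)=0$ in $\Om$ with $v_0=\varphi$ on $\p\Om$. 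Combined with the corollary on convergence $u_\e\to u_0$, a routine commutation of limits shows $\lim_{t\to\infty}u_0(\cdot,t)=v_0$.

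The last step is uniqueness for the sub-Riemannian minimal surface equation $h_0=0$ with prescribed boundary data. Here I would appeal to the comparison principle for viscosity solutions of the mean curvature type equation in the sub-Riemannian setting (as developed in \cite{CC}), which applies to Lipschitz solutions with the same continuous boundary values; the convexity assumption on $\Om$ from Definition \ref{defconvex} is precisely what is needed to build adequate barriers at $\p\Om$. Uniqueness then identifies the whole limit $v_0$ independently of the subsequence $\e_k$, giving $u_\e(\cdot,t)\to v_0$ as $(\e,t^{-1})\to(0,0)$.

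The main obstacle I expect is not any single one of the three limiting operations, but rather the justification that the $t\to\infty$ and $\e\to 0$ limits commute. This reduces, in the end, to checking that the stable Schauder bounds \eqref{stable estimates} truly are uniform in both parameters simultaneously, which is already provided by Theorem \ref{global in time  existence results}, and to the sub-Riemannian comparison principle supplying uniqueness in the limit. Once these two ingredients are in hand, the corollary follows from compactness and uniqueness in a standard way.
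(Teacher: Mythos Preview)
The paper does not supply a proof of this corollary; it is stated in the introduction as an immediate consequence of the uniform-in-$\e$, uniform-in-$t$ estimates of Theorem~\ref{global in time  existence results}, and no further argument is given. Your proposal therefore cannot be compared line by line with the paper's proof, but it is a sound and standard way to fill in the details: the area-monotonicity identity, compactness from the stable $C^k$ bounds \eqref{stable estimates}, and uniqueness of the limiting Dirichlet problem are precisely the expected ingredients, and your discussion of the commutation of the $t\to\infty$ and $\e\to 0$ limits correctly identifies that everything rests on the simultaneous uniformity already furnished by Theorem~\ref{global in time  existence results}.

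One technical remark. For the identity $\frac{d}{dt}\int_\Om W_\e\,dx=-\int_\Om W_\e h_\e^2\,dx$ you tacitly use that the boundary term from integration by parts vanishes because $\p_t u_\e=0$ on $\p\Om$. The paper, however, only asserts $u_\e\in L^\infty((0,\infty),C^1(\bar\Om))$ at the boundary, so continuity of $\p_t u_\e$ up to $\p\Om$ is not given for free; you should either insert a cutoff and pass to the limit, or argue higher boundary regularity separately. An alternative route, more in line with the tools the paper actually develops, is to bypass the area identity: by Lemma~\ref{derivatedestre} the function $v_0=\p_t u_\e$ solves a uniformly parabolic linear equation in divergence form, vanishes on the lateral boundary, and is bounded by Proposition~\ref{u_t}; a standard energy decay (or the Harnack-based oscillation estimate underlying the paper's $C^{1,\alpha}$ theory) then gives $\p_t u_\e(\cdot,t)\to 0$ locally uniformly as $t\to\infty$, from which $h_\e=0$ for every subsequential limit follows directly. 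Either way your overall scheme---compactness plus uniqueness---goes through.
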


Regularity of minimal surfaces in the special case of Heisenberg group 
has  been investigated in \cite {GarofaloNhieu, Pauls, CHMY, CHY, CCM1, CCM2, DGNP, MR, RR1,  serracassano}.

%
%{\bf WHICH CONVERGENCE? THE ONLY UNIFORM BOUNDS AS $T\to \infty$ ARE THE GLOBAL LIPSCHITZ BOUND ABOVE (WHICH GIVES US A UNIFORM CONVERGENCE ON COMPACT SETS TO A LIPSCHITZ GRAPH) AND ON THE PERIMETER OF THE GRAPH, WHICH IS NON INCREASING AND DECREASING AS LONG AS THE GRAPH IS NOT MINIMAL Non capisco dove abbiamo una dipendenza di $T$. Le stime di $\Gamma$ non dipendono da $T$.}
%

%In particular we have fund a solution of the minimal surface equation for graphs.

\section{Structure stability in the Riemannian limit}

If $x\in G$ and $r>0$, we will denote by
 $$B(x,r)=\{y\in G \ |\ d_0(x,y)<r\}$$ the   balls
in the Carnot-Carath\'eodory control  distance corresponding to the subRiemannian metric $\sigma_0$. For each $\e>0$ we also define the distance function $d_\e$ corresponding to the Riemannian metric $\sigma_\e$,
$$d_\e(x,y)=\inf\{ \int_0^1 |\gamma'|_{\sigma_\e}(s) ds \text{ with }\gamma:[0,1]\to G \quad\quad\quad\quad\quad\quad\quad\quad $$$$\quad\quad\quad\quad\quad\quad\quad\quad\quad\quad\quad\quad
\text { a Lipschitz curve s. t. } \gamma(0)=x, \gamma(1)=y\}.$$ Set $$B_\e(x,r)=\{y\in G| d_\e(x,y)<r\}.$$
Note that  in the definition of $d_\e$, if the curve for which the infimum is achieved happens to be horizontal
then $d_\e(x,y)=d_0(x,y)$. In general we have $\sup_{\e>0} d_\e(x,y)= d_0(x,y)$ and
it is well known  that $(G,d_{\e})$  converges
in the Gromov-Hausdorff sense as $\e\to 0$ to the sub-Riemannian
space $(G,d_0)$. (See for instance \cite{CCstein, gro:metric} and references therein). 
%  Stability of the homogenous structure
\subsection{Stability of the homogenous structure  as $\e\to 0$}
%
% If $G$ is a Carnot group, $d_\e$ is the distance function associated to $\sigma_\e$, we will denote
%$$B_\e(x,r)=\{y\in G| d_\e(x,y)<r\}.$$
If we denote by $dx$ the Haar measure in $G$, and by $|\Om|$ the corresponding measure of a subset $\Om\subset G$, then Rea and two of the authors have  proved in \cite{CCstein,CCR}
that \begin{prop}\label{homog-stab}
There exist  constants $C,R>0 $ independent of $\epsilon$ such that for every $x\in G$ and $R>r>0$,
$$|B_\e(x,2r)|\le C |B_\e(x,r)|.$$
 \end{prop}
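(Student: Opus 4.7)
The plan is to split the argument into two stages: a Rothschild--Stein reduction to the free step-2 model $G_{m,2}$, and a clean dilation/monotonicity computation inside the model.

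First, using Theorem \ref{rs-5} and the smooth dependence of $\Phi_x$ on $x$, I would show that near any $x\in G$ the Riemannian metric $\s_\e$ pulls back through the osculating diffeomorphism to a metric on $G_{m,2}$ uniformly equivalent to the canonical left-invariant collapsing family, with constants independent of $\e\in(0,1]$ and of $x$. This requires that the remainder vector fields $R_i$ of local degree $\le 0$ do not spoil the comparison of $\s_\e$-lengths between $G$ and the model, uniformly in $\e$. Granting this comparison, Haar-measure balls of radius $r<R$ in $(G,\s_\e)$ are then comparable, with $\e$-uniform constants, to Riemannian balls of radius $r$ centered at the identity of $G_{m,2}$, reducing the proposition to uniform doubling on the free step-2 group.

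In $G_{m,2}$ I would use left-invariance to place the center at the identity and then exploit the anisotropic dilations $\delta_\lambda$, which scale horizontal coordinates by $\lambda$ and vertical coordinates by $\lambda^2$. The identity $d\delta_\lambda(X_i(p))=\lambda^{d(X_i)}X_i(\delta_\lambda p)$ yields, by a direct length computation, that the $\s_{\lambda\e}$-length of $\delta_\lambda\circ\gamma$ equals $\lambda$ times the $\s_\e$-length of $\gamma$. Hence
$$
d_{\lambda\e}(\delta_\lambda x,\delta_\lambda y)=\lambda\,d_\e(x,y),\qquad \delta_\lambda B_\e(0,r)=B_{\lambda\e}(0,\lambda r),
$$
and since $\delta_\lambda$ has Jacobian $\lambda^Q$ with $Q=m+2(n-m)$ the homogeneous dimension, I obtain the scaling identity $|B_\e(0,r)|=r^Q|B_{\e/r}(0,1)|$. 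Combining the scalings for $2r$ and $r$ gives
$$
\frac{|B_\e(0,2r)|}{|B_\e(0,r)|}=2^Q\,\frac{|B_{\e/(2r)}(0,1)|}{|B_{\e/r}(0,1)|}.
$$

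The proof is closed by monotonicity: since vertical generators satisfy $|X_j|_{\s_\e}=1/\e$, increasing $\e$ lowers the cost of vertical motion, so $\mu\mapsto d_\mu$ is nonincreasing and $\{B_\mu(0,1)\}_{\mu>0}$ is nondecreasing. In particular $B_{\e/(2r)}(0,1)\subset B_{\e/r}(0,1)$, and the doubling ratio above is at most $2^Q$, independent of both $\e$ and $r$. The main obstacle is the uniform-in-$\e$ Rothschild--Stein comparison in the first stage: Theorem \ref{rs-5} is phrased at the level of the horizontal frame (i.e.\ at $\e=0$), and controlling the remainders $R_i$ against the $\s_\e$-metric demands a delicate control-theoretic estimate on the flows of $X_i^\e$---precisely the content carried out in \cite{CCstein,CCR}. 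The dilation calculation above is then the clean final step it enables.
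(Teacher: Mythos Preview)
The paper does not actually prove this proposition: it is quoted verbatim as a result established by the authors in \cite{CCstein,CCR}, with no argument given in the present paper. So there is no in-paper proof to compare your proposal against.

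That said, your sketch is sound and is a faithful outline of the strategy in those references. The second stage---the model computation on $G_{m,2}$---is correct: the scaling identity $d_{\lambda\e}(\delta_\lambda x,\delta_\lambda y)=\lambda\,d_\e(x,y)$, the volume formula $|B_\e(0,r)|=r^Q|B_{\e/r}(0,1)|$, and the monotonicity $\mu\mapsto |B_\mu(0,1)|$ nondecreasing combine exactly as you say to give a doubling constant $2^Q$ uniformly in $\e$ on the free nilpotent model. You also correctly identify the genuine technical content as lying in the first stage: proving that the osculating map $\Theta_{x}$ of Theorem~\ref{rs-5} carries $\s_\e$-balls in $G$ to sets uniformly comparable (in $\e$ and in $r<R$) to $\s_\e$-balls in $G_{m,2}$. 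This is indeed what \cite{CCstein,CCR} supply, and you are right that the subtlety is that Theorem~\ref{rs-5} is stated for the horizontal frame only, so the control of the remainders $R_i$ against the full collapsing metric $\s_\e$ requires additional work. In short, your proposal reproduces the paper's treatment (deferral to \cite{CCstein,CCR}) and adds a clean exposition of the final dilation step; there is no discrepancy.
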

Having this property the spaces $(G,d_\e,dx)$ are called {\it  homogenous} with constant $C>0$ independent of $\e$ (see \cite{cw:1971}).

Let $\tau>0$ and consider the space $\tilde G = G\times (0,\tau)$ with its  product Lebesgue measure $dxdt$. In $\tilde G$ define the pseudo-distance function
\begin{equation} \label{defde}\tilde d_{\e}( (x,t), (y,s))= \max( d_\e(x,y), \sqrt{|t-s|} ).
\end{equation}
 Proposition \ref{homog-stab} tells us that
$(\tilde G, \tilde d_{\e},dxdt)$ is a homogeneous space with constant independent of $\e\ge 0$. 
Likewise, the Poincar\'e inequality holds for all $\e$ near zero, with constant independent of $\e$.

\subsection{Stability of Schauder estimates}

Let us recall uniform estimates in spaces of H\"older continuous functions 
for solutions of second order sub-elliptic differential equations in non divergence form 
$$L_{\e, A} u\equiv \p_t u- \sum_{i,j=1}^n  a^\e_{ij}(x,t) X_i^\e X_j^\e u=0 ,$$
in a cylinder $ Q=\Om\times (0,T)$ that are stable as $\e\to 0$. We will  assume ellipticity
$$\Lambda^{-1}|\eta|^2\le  a^\e_{ij}(x,t) \eta^i \eta^j \le \Lambda |\eta|^2,$$  with $\Lambda>0$, and for a.e. $(x,t)\in Q$, all $\eta\in \R^n$ and $\e\in [0,1]$.

Let us start with the definition of classes of H\"older continuous functions in this setting

\begin{dfn}\label{defholder}
Let $0<\alpha < 1$, ${Q}\subset\R^{n+1}$ and  $u$ be defined on
${Q}.$ We say that $u \in C_{\e,X}^{\alpha}({Q})$ if there exists a positive constant $M$ such that for
every $(x,t), (x_{0},t_0)\in {Q}$ 
\begin{equation}\label{e301}
   |u(x,t) - u(x_{0},t_0)| \le M \tilde d_{\e }^\alpha((x,t), (x_{0},t_0)).
\end{equation}
We put  
 $$||u||_{C_{\e,X}^{\alpha}({Q})}=\sup_{(x,t)\neq(x_{0},t_0)} \frac {|u(x,t) - u(x_{0},t_0)|}{\tilde d_{\e }^\alpha((x,t), (x_{0},t_0))}+ \sup_{Q} |u|.$$
 Iterating this definition, 
 if $k\geq 1$  we say that $u \in
C_{\e,X}^{k,\alpha}({Q})$  if for all $i=1,\ldots ,m$
 $X_i u \in C_{\e,X}^{k-1,\alpha}({Q})$.
 Where we have set $C^{0,\alpha}_{\e,X}({Q})=C^{\alpha}_{\e, X}({Q}).$ 
 %We will denote
 %$C^{k,\alpha}_{0,X}$ with the notation $C^{k,\alpha}_{X}$.
\end{dfn}

Internal Schauder estimates for these type of operators are well known. 
We recall  the results of  Capogna
and Han \cite{CapognaHan} for uniformly subelliptic operators, of Bramanti and Brandolini \cite{BramantiBrandolini}
for heat-type operators,  and the results of Lunardi \cite{Lunardi} and 
Guti\'errez and Lanconelli  \cite{GutierrezLanconelli}, which apply to a large 
class of squares of vector fields plus a drift term.    
Schauder estimates uniform in $\e$ have been proved by the authors in \cite{CCM3} in the setting of Carnot Groups and by  two of us in \cite{CCstein} in the setting of general H\"ormander type vector fields. 

These result can be stated as

\begin{prop}\label{ellek}
Let  $w$ be a smooth  solution of $L_{\e, A}w=f$ on ${Q}$.
Let $K$ be a compact sets such that  $K\subset\subset {Q}$,  set $2\delta=d_0(K, \p_p Q)$ and
denote by $K_\delta$ the $\delta-$tubular neighborhood of $K$. Assume that 
there exists a constant $C>0$ such that 
$$ || a_{ij}^\e||_{C^{k,\alpha}_{\e,X}(K_\delta)} \leq C,$$ for any $\e\in (0,1)$.
There exists a constant $C_1>0$ depending  on
$\alpha$, $C$, $\delta$
but independent of $\e$,  such that
$$||w||_{C^{k+2, \alpha}_{\e,X}(K)} \leq C_1 \left( ||f||_{C^{k,\alpha}_{\e,X}(K_\delta)}+ ||w||_{C^{k+1, \alpha}_{\e,X}(K_\delta)}\right). $$
\end{prop}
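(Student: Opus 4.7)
The plan is to adapt the Schauder-estimate machinery developed in \cite{CCM3,CCstein} to the present Lie-group setting. The crucial point throughout is that the uniform doubling property in Proposition \ref{homog-stab} and the associated uniform Poincar\'e inequality hold with constants independent of $\e$; these are exactly the ingredients that allow every subsequent constant to be tracked as $\e$-independent.

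Operationally I would proceed in three steps. First, localize: cover $K$ by $\tilde d_{\e}$-parabolic balls of a fixed radius $r_0<\delta$, with finite covering multiplicity bounded independently of $\e$ by the uniform homogeneity. On each such ball, invoke the Rothschild--Stein osculating Theorem \ref{rs-5} to diffeomorph a neighborhood in $G$ onto a neighborhood of the identity in the free group $G_{m,2}$; under this map the vector fields $X_i$ lift to $Y_i+R_i$, with the lower-degree remainders $R_i$ absorbable by interpolation into lower-order norms. Second, freeze coefficients at the center $(x_0,t_0)$ of each small ball and consider the constant-coefficient model operator $\tilde L_0 = \p_t-\sum a^\e_{ij}(x_0,t_0)\,Y_i^\e Y_j^\e$ on $G_{m,2}\times\R$. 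Its fundamental solution obeys Gaussian-type bounds in $\tilde d_{\e}$ with constants uniform in $\e$, yielding a local representation of $w$ as the convolution of this fundamental solution with $f$ plus a perturbation term $\sum (a^\e_{ij}(x_0,t_0)-a^\e_{ij}(\cdot,\cdot))\,X_i^\e X_j^\e w$. Using the H\"older continuity of $a^\e_{ij}$ together with a Campanato-type oscillation argument on concentric balls, one absorbs the perturbation term into the left-hand side and obtains the base case $k=0$, namely $C^{2,\alpha}_{\e,X}$ control of $w$ in terms of $\|f\|_{C^{0,\alpha}_{\e,X}(K_\delta)}$ and $\|w\|_{C^{1,\alpha}_{\e,X}(K_\delta)}$. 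Third, bootstrap: applying $X_i^\e$ to the equation for $i=1,\ldots,m$ shows that each $X_i^\e w$ solves an equation of the same form with forcing in $C^{k-1,\alpha}_{\e,X}$ (the commutators $[X_i^\e,X_j^\e X_\ell^\e]$ being of lower order in the parabolic scaling and controllable by interpolation), so induction on $k$ closes the full estimate.

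The main technical obstacle is the stability of the frozen-operator kernel estimates as $\e\to 0$. The kernel and all of its horizontal spatial derivatives must obey Gaussian-type bounds relative to $\tilde d_{\e}$ that are quantitatively compatible with the Rothschild--Stein diffeomorphism (whose Jacobian depends on $\e$) and do not degenerate in the sub-Riemannian limit. This is precisely the core analysis carried out in \cite{CCstein}; once those uniform kernel estimates are available, the freezing-plus-perturbation argument runs as in the Euclidean or Carnot-group case, and the constant $C_1$ depends only on $\alpha$, $C$ and $\delta$, never on $\e$.
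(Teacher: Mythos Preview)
The paper does not actually prove this proposition: it is stated as a quotation of results already established by the authors elsewhere, with the sentence ``Schauder estimates uniform in $\e$ have been proved by the authors in \cite{CCM3} in the setting of Carnot Groups and by two of us in \cite{CCstein} in the setting of general H\"ormander type vector fields. These result can be stated as\ldots'' immediately preceding it. So there is no in-paper proof to compare against; the proposition functions here purely as a black box invoked later in Section~4.

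Your sketch is a faithful outline of the strategy actually carried out in \cite{CCstein}: localize, osculate via Rothschild--Stein to reduce to a constant-coefficient model on the free nilpotent group, exploit uniform-in-$\e$ Gaussian bounds for the model parametrix, run a freezing/Campanato argument for the base case, and bootstrap by differentiation. You correctly identify the crux, namely that the heat-kernel estimates for the frozen operator must be stable as $\e\to 0$, and that this is exactly what \cite{CCstein} supplies. One small caution on the bootstrap step: differentiating along the left-invariant $X_i^\e$ does not directly produce an equation ``of the same form'' because $[X_i^\e, a^\e_{jl}X_j^\e X_l^\e]$ generates first-order terms with coefficients involving $X_i^\e a^\e_{jl}$, which a priori are only $C^{k-1,\alpha}_{\e,X}$; the induction therefore has to be set up so that these extra drift terms are already controlled by the inductive hypothesis and an interpolation inequality (this is standard, but worth making explicit). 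With that caveat, your proposal is correct and matches the approach in the cited references.
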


\section{Gradient estimates}

In this section we prove Theorem \ref{global estimates}. The proof is carried out in two steps: First we use the maximum principle to establish interior $L^\infty$ bounds for the full gradient of the solution $\nabla_1 u$ of \eqref{ivp} with respect to the Lipschitz norm of $u$ on the parabolic boundary. Next, we construct appropriate barriers and invoke the comparison principle established in \cite{CC} to prove boundary gradient estimates. The combination of the two will yield the uniform global Lipschitz bounds.

\subsection{Interior gradient estimates} Recalling that the right invariant vector fields $X_j^r$ commute with the left invariant frame $X_i$, $i=1,\ldots ,n$ it is easy to show through a direct computation the following result.

\begin{lemma}\label{derivatedestre}
Let $u_\e\in C^3(Q)$ be a solution to \eqref{pdee} and denote
 $v_0=\partial_t u_\e$, $v_i = X_i^{r}u_\e$ for $i=i, \ldots, n$. Then
 for every $h=0,\ldots ,n$ one has that $v_h$ is a solution of
\begin{equation}\label{diff-eq}\p_t v_h= X_i^\e ( a_{ij} X_jv_h )= a_{ij}^\e(\nabla_\e u_\e) X_i^\e X_j^\e v_h + \p_{\xi_k} a_{ij}^\e(\nabla_\e u)X_i^\e X_j^\e u_\e X_k^\e v_h,\end{equation}
where
%$$ a_{ij}^\e(\xi) = \frac{1}{\sqrt{1+|\xi|^2}}\Big(\delta_{ij}- \frac{\xi_i \xi_j}{1+|\xi|^2}\Big).$$
$$ a_{ij}^\e(\xi) =  \delta_{ij}- \frac{\xi_i \xi_j}{1+|\xi|^2}.$$
\end{lemma}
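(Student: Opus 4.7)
The plan is a direct computation resting on two commutation identities:
\begin{itemize}
\item $[\p_t, X_i^\e]=0$ for all $i$, since the frame is time-independent;
\item $[X_h^r, X_i^\e]=0$ for all $h,i$, the standard Lie-theoretic fact that right-invariant and left-invariant vector fields commute on any Lie group.
\end{itemize}
Both cases ($h=0$ and $h\ge 1$) can be handled at once by letting $D=\p_t$ when $h=0$ and $D=X_h^r$ when $h\geq 1$, so that $Dv_h=\p_t v_h$ and $D$ commutes with every $X_i^\e$ (and with $\p_t$).

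Applying $D$ to both sides of \eqref{pdee} and moving it past every $X_i^\e$ gives
$$D(X_k^\e u_\e)=X_k^\e v_h,\qquad D(X_i^\e X_j^\e u_\e)=X_i^\e X_j^\e v_h,$$
while the chain rule produces $D[a_{ij}^\e(\nabla_\e u_\e)]=\p_{\xi_k}a_{ij}^\e(\nabla_\e u_\e)\,X_k^\e v_h$. Combining these with the Leibniz rule yields the non-divergence form
$$\p_t v_h=a_{ij}^\e(\nabla_\e u_\e)\,X_i^\e X_j^\e v_h+\p_{\xi_k}a_{ij}^\e(\nabla_\e u_\e)\,X_i^\e X_j^\e u_\e\,X_k^\e v_h,$$
which is the second equality of the lemma.

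For the divergence form, I would expand by the Leibniz and chain rules
$$X_i^\e\bigl(a_{ij}^\e(\nabla_\e u_\e)\,X_j^\e v_h\bigr)=a_{ij}^\e\,X_i^\e X_j^\e v_h+\p_{\xi_k}a_{ij}^\e\,X_i^\e X_k^\e u_\e\,X_j^\e v_h,$$
and identify the first-order term with the drift above by renaming dummy summation indices and invoking the symmetry $a_{ij}=a_{ji}$ together with the explicit structure of $\p_{\xi_k}a_{ij}^\e$ coming from \eqref{defaij}. The computation is entirely pointwise; no self-adjointness of the frame or use of the PDE itself beyond \eqref{pdee} is required.

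The calculation is routine, so there is no real conceptual obstacle; the only place where care is needed is the index bookkeeping across two applications of the chain rule and the interchange between left-invariant and right-invariant differentiation. The lemma extracts precisely the fact that the right-invariant (and time) derivatives of $u_\e$ satisfy a linearized equation which is simultaneously in non-divergence form (suitable for the maximum-principle arguments of the next subsection) and divergence form (suitable for the weak Harnack inequality leading to $C^{1,\alpha}$ estimates later on).
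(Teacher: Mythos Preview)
Your derivation of the non-divergence form (the second equality) is correct and is exactly the computation the paper indicates: apply the commuting operator $D\in\{\p_t,X_h^r\}$ to \eqref{pdee} and use the chain rule. Nothing more is needed for that part, and this is what is actually used in the subsequent maximum-principle argument.

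The gap is in your treatment of the divergence form (the first equality $\p_t v_h=X_i^\e(a_{ij}^\e X_j^\e v_h)$). Your Leibniz expansion gives the drift $\p_{\xi_k}a_{ij}^\e\,X_i^\e X_k^\e u_\e\,X_j^\e v_h$, whereas the non-divergence form has $\p_{\xi_k}a_{ij}^\e\,X_i^\e X_j^\e u_\e\,X_k^\e v_h$. These do \emph{not} coincide by mere index relabeling and the symmetry $a_{ij}=a_{ji}$: swapping $j\leftrightarrow k$ in the first produces $\p_{\xi_j}a_{ik}^\e$ in place of $\p_{\xi_k}a_{ij}^\e$, and a direct computation from \eqref{defaij} gives
\[
\p_{\xi_k}a_{ij}^\e-\p_{\xi_j}a_{ik}^\e=\frac{\delta_{ik}\xi_j-\delta_{ij}\xi_k}{1+|\xi|^2},
\]
which is not identically zero. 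In fact the paper itself, in the remark preceding \eqref{RG-1}, writes the divergence-form equation for $v_k$ with precisely this extra first-order term, setting $a^{i,j,h}=\p_{p_h}a_{ij}^\e-\p_{p_j}a_{ih}^\e$. So the first equality in the lemma as printed is inexact, and your attempt to justify it fails for a genuine reason; the correct divergence form carries the additional drift, and that is the version actually used later for the weak Harnack/H\"older argument.
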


Note that in order to prove $L^\infty$ bounds on the horizontal gradient of solutions of \eqref{ivp} one {\it cannot} invoke Lemma \ref{derivatedestre} with differentiationalong  the horizontal left invariant  frame, because
such vector fields do not commute.

Let us explicitly note that the right derivatives 
$X^{1, r}_k$ are a basis of the tangent space, as well as 
$X^{1}_k$, so that it is possible to represent 
each family of vector fields as linear combination of the other. 
In particular, in the Carnot setting, it has been proved by \cite{Roth:Stein}
that there exist  homogenous polynomials $c_{kj}$ such that 
\begin{equation}\label{lr}X^{1}_k = \sum_j c_{kj} X^{1, r}_j.\end{equation}
In the general Lie groups setting this assertion is true only locally  
and  the functions  $c_{kj}$ are polynomials in the 
local exponential variables independent of $\e$. 

In view of this observation  and from the weak maximum principle one may easily deduce that:

\begin{prop}\label{u_t}
Let $u_\e\in C^3(Q)$ be a solution to \eqref{ivp} with $\Om$ bounded. There exists $C=C(G,||\varphi||_{C^2( \Om )})>0$
such that for every compact subset $K\subset \subset \Om$ one has
$$\sup_{K \times [0,T)} |\nabla_1 u_\e|  \leq \sup_{\partial_p Q}(|\nabla_1 u_\e| + |\partial _ t u_\e|),$$
where $\nabla_1$ is the  full $\s_1-$Riemannian gradient.
\end{prop}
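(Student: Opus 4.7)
My plan is to apply the weak maximum principle to the linear parabolic equations for the right-invariant and time derivatives of $u_\e$ that are provided by Lemma \ref{derivatedestre}, and then to convert the resulting bounds on right-invariant derivatives into bounds on the full left-invariant Riemannian gradient $\nabla_1 u_\e$ using the polynomial change-of-frame identity \eqref{lr}.

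Concretely, set $v_0 = \partial_t u_\e$ and $v_i = X_i^r u_\e$ for $i = 1, \ldots, n$. Lemma \ref{derivatedestre} shows that each $v_h$ satisfies the linear, non-divergence form parabolic equation \eqref{diff-eq}. Its principal part $(a_{ij}^\e)$ is uniformly elliptic by \eqref{defaij} (the eigenvalues of $\delta_{ij}-\xi_i\xi_j/(1+|\xi|^2)$ lie in $[1/(1+|\xi|^2),1]$); its first-order drift $\partial_{\xi_k}a_{ij}^\e(\nabla_\e u_\e)\,X_i^\e X_j^\e u_\e$ is bounded, because $u_\e\in C^3(Q)$ and $\partial_{\xi_k}a_{ij}^\e$ is smooth; and, crucially, there is \emph{no zero-order term}. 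The classical weak maximum principle therefore yields
\begin{equation*}
\sup_{Q} |v_h| \le \sup_{\partial_p Q} |v_h|, \qquad h = 0,1,\ldots,n.
\end{equation*}

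To pass from right-invariant to left-invariant derivatives, I invoke \eqref{lr}: in a suitable neighborhood of $\bar\Om$ there exist polynomial functions $c_{kj}$ of the exponential coordinates, independent of $\e$, such that $X_k^1 = \sum_j c_{kj} X_j^{1,r}$. On the bounded set $\bar\Om$ these coefficients are uniformly bounded by a constant $C = C(G,\Om)$, so $|\nabla_1 u_\e|\le C\sum_{j=1}^n |v_j|$ on $K$. Applying the inverse of this polynomial relation on $\partial_p Q$ expresses each $|v_j|=|X_j^r u_\e|$ in terms of $|\nabla_1 u_\e|$ there, while $|v_0|=|\partial_t u_\e|$ already appears directly; combining with the maximum principle inequality above gives the stated bound (with a constant absorbed into the right-hand side).

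The main subtlety is conceptual rather than computational: because $G$ is merely a Lie group of step two and not necessarily a Carnot group, the change-of-frame identity \eqref{lr} is only local, and so the polynomial coefficients $c_{kj}$ depend on the size of the exponential chart needed to cover $\bar\Om$. This is precisely what produces the geometric dependence $C=C(G,\|\varphi\|_{C^2(\bar\Om)})$ in the statement. A secondary technical point is to verify that the drift coefficient in \eqref{diff-eq} is bounded uniformly in $\e$; this follows from the smoothness of $\xi\mapsto a_{ij}^\e(\xi)$ together with the assumed $C^3$ regularity of $u_\e$, and does not require any additional a priori gradient estimate.
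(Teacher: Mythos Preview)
Your proposal is correct and follows exactly the approach sketched in the paper: apply the weak maximum principle to the linear equations \eqref{diff-eq} for $v_h=\partial_t u_\e$ and $v_h=X_h^r u_\e$ (which have no zero-order term), then use the change-of-frame relation \eqref{lr} between left- and right-invariant vector fields to convert the resulting bounds on $X_h^r u_\e$ into bounds on $\nabla_1 u_\e$. The paper compresses this into the single sentence preceding the proposition; your write-up simply fills in those details. One small remark: for each fixed $\e$ the maximum principle requires no quantitative bound on the drift, so your last paragraph's claim of uniformity in $\e$ is unnecessary (and in fact not available at this stage, since the $C^3$ norm of $u_\e$ is not yet known to be uniform).
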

%\begin{proof}
%the proof follows from the simple fact that the left invariant invariant vector fields as well as 
%the right invariant are a basis of the space. 
%Hence 
%$$X_i^1 = \sum_{i=1}^nc_{ij}X_j^{1, r},$$
%for suitable smoorh functions $c_{ij}$. In particular, since $\Omega$ is bonded and $|X_j^{1, r}u|$ is bounded, also $X_i^1 u$ is bounded.
%\end{proof}

\subsection{Barrier functions and boundary gradient estimate}

In \cite[Section 4.2]{CC} it is shown that in a step two Carnot group coordinate planes (i.e. images under the exponential of level sets of the form $x_k=0$) solve the minimal surface equation $h_0=0$. In the same paper
it is also shown that this may  fail for step three or higher. In order to adapt the construction of the barrier to the present non-nilpotent setting we will need a  refinement of this result, based on the following definition of convex set:
 \begin{dfn} \label{defconvex}
For every point $x_0\in \partial \Omega$  consider the canonical coordinates  
$\Phi_{x_0}(u)$ defined in (\ref{phi0}) and centered at $x_0$ (so that $x_0$ is represented by the origin in these coordinates). 
%Recall that the coordinates of the point $x_0$ in these new coordinates is $0$. 
Assume that $\Omega$ has a tangent plane $\Pi$ 
at the point $u=0$ and assume that $\Phi_{x_0}^{-1}(\Omega)$ is lying 
on one side  of the plane. 
If this happens for every $x_0\in \partial\Omega$ we say that $\Omega$ is convex. 
 \end{dfn}
In a step two Carnot group this definition is equivalent to the one in \cite{CCM3}, and every set that is Euclidean  convex when expressed in exponential coordinates satisfies the condition. Using Darboux coordinates one can see that the same holds for the root-translation group $\mathcal{RT}$.
\begin{lemma}\label{planes are flat}
Let $G$ be a step two Carnot group. If $f:G\to \R$ is linear (in exponential coordinates) then for every $\e\ge 0$,  the matrix with entries
$X_i^\e X_j^\e f$ is anti-symmetric, in particular every level set of $f$ satisfies $h_\e=0$.
\end{lemma}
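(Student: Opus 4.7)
My approach is to compute the Hessian directly in exponential coordinates. Denote by $b^k_{ij}$ the structure constants defined by $[X_i,X_j]=\sum_{k\ge m+1}b^k_{ij}X_k$ for $i,j\le m$. Since the group is of step two the Baker--Campbell--Hausdorff formula truncates, and the group law in exponential coordinates takes the form $(x\cdot y)_\ell=x_\ell+y_\ell$ for $\ell\le m$ and $(x\cdot y)_k=x_k+y_k+\tfrac{1}{2}\sum_{a,b\le m}b^k_{ab}x_a y_b$ for $k\ge m+1$. Differentiating right translation at the identity gives
\[
X_j=\p_{x_j}+\tfrac{1}{2}\sum_{k\ge m+1}\sum_{a\le m}b^k_{aj}\,x_a\,\p_{x_k}\quad (j\le m),\qquad X_j=\p_{x_j}\quad (j\ge m+1),
\]
and by construction $X_j^\e=X_j$ for $j\le m$, while $X_j^\e=\e\,\p_{x_j}$ for $j\ge m+1$.

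Now fix $f(x)=\sum_\ell a_\ell x_\ell$. A direct computation yields $X_j^\e f=a_j+\tfrac{1}{2}\sum_{k\ge m+1}\sum_{a\le m}b^k_{aj}a_k x_a$ when $j\le m$, and $X_j^\e f=\e a_j$ (a constant) when $j\ge m+1$. I would then split $X_i^\e X_j^\e f$ into four cases according to the layer of each of $i,j$. All three mixed cases with at least one index in the second layer yield zero: either $X_j^\e f$ is already constant, or the outer operator $X_i^\e=\e\,\p_{x_i}$ with $i\ge m+1$ is applied to a polynomial depending only on first layer variables. For the remaining block $i,j\le m$ one finds
\[
X_i^\e X_j^\e f=\tfrac{1}{2}\sum_{k\ge m+1}b^k_{ij}\,a_k,
\]
which is antisymmetric in $(i,j)$ by antisymmetry of the Lie brackets. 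Hence the full $n\times n$ Hessian matrix is antisymmetric for every $\e\ge 0$.

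For the mean curvature consequence, at any non-characteristic point of $\{f=c\}$ one has
\[
h_\e=\sum_{i=1}^n X_i^\e\!\left(\frac{X_i^\e f}{|\nabla_\e f|}\right)=\frac{1}{|\nabla_\e f|}\sum_{i=1}^n X_i^\e X_i^\e f-\frac{1}{|\nabla_\e f|^3}\sum_{i,j=1}^n(X_i^\e f)(X_j^\e f)\,X_i^\e X_j^\e f.
\]
The first sum is the trace of an antisymmetric matrix, hence zero, and the second pairs the symmetric rank one tensor $(X_i^\e f)(X_j^\e f)$ with the antisymmetric Hessian, so it also vanishes. Therefore $h_\e\equiv 0$ on every level set of $f$. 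The computation is essentially algebraic; the only point requiring care is the four-case split, to verify that the $\e$-weighting of the second layer fields does not spoil antisymmetry, which it does not because every contribution involving a second layer $X_j^\e$ turns out to be zero outright.
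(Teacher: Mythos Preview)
Your proof is correct. The paper states this lemma without proof, referring implicitly to \cite[Section 4.2]{CC} for the special case of coordinate planes and $\e=0$; your argument is the natural direct computation that fills in the details and covers the full statement (arbitrary linear $f$ and all $\e\ge 0$).

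One small remark: your phrase ``differentiating right translation at the identity'' is potentially confusing---the left-invariant field $X_j$ is obtained by differentiating $t\mapsto x\cdot\exp(t e_j)$, i.e.\ right multiplication by the one-parameter subgroup, which is what your formula actually computes. The formula itself is correct, so this is only a matter of wording. Everything else---the BCH-based group law, the four-block case split, and the contraction argument for $h_\e$---is clean and accurate.
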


The previous lemma and the Rothschild and Stein local osculation result will lead 
to the construction of a barrier in the present setting, and to establish
a priori Lipschitz estimates at the boundary for  solutions. 
We begin by recalling an immediate consequence of the proof of \cite[Theorem 3.3]{CC}.

\begin{lemma}\label{ESth32bis}
For each $\e\ge 0$,
if  $v_\e$ is a bounded  subsolution and $w_\e$ is a bounded
 supersolution of \eqref{ivp} then $v_\e(x,t)\leq w_\e(x,t)$ for all $(x,t)\in Q$.
\end{lemma}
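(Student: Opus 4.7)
\emph{Proof strategy.} The plan is to prove the comparison principle by adapting the standard Jensen--Ishii viscosity doubling of variables argument to the (possibly degenerate) parabolic equation \eqref{pdee}. The argument must work uniformly in $\e\ge 0$, including the subRiemannian limit $\e=0$, where the equation is only horizontally elliptic and singular at characteristic points where $\nabla_0 u=0$.

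\emph{Reduction and doubling.} Assume for contradiction that $M:=\sup_Q(v_\e-w_\e)>0$. Replacing $v_\e$ by $v_\e-\eta t$ for a sufficiently small $\eta>0$ makes $v_\e$ a strict subsolution while preserving $M>0$; since $v_\e\le w_\e$ on $\p_p Q$, any maximizing sequence remains in an interior compact subset of $Q$. For large $\alpha>0$ I would introduce
$$\Phi_\alpha(x,y,t,s):=v_\e(x,t)-w_\e(y,s)-\tfrac{\alpha}{2}\,d_1(x,y)^2-\tfrac{\alpha}{2}(t-s)^2,$$
with $d_1$ the $\sigma_1$-Riemannian distance, and let $(\bar x,\bar y,\bar t,\bar s)$ realize the maximum. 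Standard penalty estimates give $\alpha\, d_1(\bar x,\bar y)^2\to 0$ and $\alpha(\bar t-\bar s)^2\to 0$ as $\alpha\to\infty$, while $\Phi_\alpha(\bar x,\bar y,\bar t,\bar s)\to M$.

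\emph{Theorem of sums and conclusion.} Apply the parabolic Crandall--Ishii theorem of sums to obtain parabolic semijets $(\tau_1,p,X)\in\overline{\mathcal P}^{2,+}v_\e(\bar x,\bar t)$ and $(\tau_2,q,Y)\in\overline{\mathcal P}^{2,-}w_\e(\bar y,\bar s)$ with $\tau_1-\tau_2=\eta$, gradients $p,q$ coming from the spatial penalty (so $|p-q|\to 0$ as $\alpha\to\infty$), and a matrix inequality controlling $X-Y$ from above by a constant multiple of the Hessian of the penalty. Inserting these jets into the sub-/super-solution inequalities for \eqref{pdee}, subtracting, and using the (horizontal) positivity and continuity of $(a_{ij}^\e)$ together with the smoothness of $d_1^2$ away from the diagonal, one forces $\eta\le o_\alpha(1)$, contradicting $\eta>0$. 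The proof for $\e>0$ is essentially standard since the equation is uniformly Riemannian-parabolic in the $\sigma_\e$-metric.

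\emph{Main obstacle.} The delicate regime is the subRiemannian limit $\e=0$: the matrix $(a_{ij}^0)$ is positive only on the horizontal subspace, and $a_{ij}^0(\xi)$ is singular at $\xi=0$. One must therefore ensure that the Hessian inequality supplied by the theorem of sums projects usefully onto the horizontal directions of the frame $X_1,\dots,X_m$, and handle characteristic points (where the test gradient vanishes) separately via the standard convention in the definition of viscosity sub-/super-solutions for singular equations. This is precisely the technical content that the authors import from \cite[Theorem 3.3]{CC}.
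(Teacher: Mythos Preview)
The paper does not actually give a proof of this lemma; it merely records it as ``an immediate consequence of the proof of \cite[Theorem~3.3]{CC}''. Your proposal is a faithful reconstruction of that viscosity comparison argument: doubling variables with a smooth Riemannian penalty $d_1^2$, invoking the parabolic theorem of sums, and subtracting the sub- and super-solution inequalities. This is correct and is precisely the approach taken in \cite{CC}.

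One inaccuracy should be fixed. You write that ``$a_{ij}^0(\xi)$ is singular at $\xi=0$'' and that characteristic points must be handled separately via the standard convention for singular equations. This is not the case here. For the \emph{graph} mean curvature flow one has $a_{ij}(\xi)=\delta_{ij}-\xi_i\xi_j/(1+|\xi|^2)$, which is smooth in $\xi$ and equals $\delta_{ij}$ at $\xi=0$; you are perhaps thinking of the level-set formulation, whose coefficients $\delta_{ij}-\xi_i\xi_j/|\xi|^2$ are genuinely singular at the origin. Consequently the only degeneracy at $\e=0$ is the one you correctly identify first, namely that the operator involves only the horizontal fields $X_1,\dots,X_m$ and is therefore degenerate in the non-horizontal directions. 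No special treatment of vanishing test gradients is required, and the comparison argument goes through without that extra case distinction.
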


Let $u_\e\in C^2(Q)$ be a solution of \eqref{ivp}, and express the evolution PDE in non-divergence form
\begin{equation}\label{boundary 1.1}
\p_t u_\e =h_\e W_\e = a_{ij}^\e (\nabla_\e u) X_i^\e X_j^\e u_\e.
\end{equation}
Set $v_\e=u_\e-\varphi$ so that $v_\e$ solves the homogenous 'boundary' value problem
 \begin{equation}\label{ivph}
 \Bigg\{
 \begin{array}{ll}
 \p_t v_\e=  a_{ij}^\e (\nabla_\e v_\e+\nabla_\e \varphi) X_i^\e X_j^\e v_\e   +b^\e &\text{ in }Q=\Om\times(0,T) \\
 v_\e=0 &\text{ on } \p_p Q,
 \end{array}
 \end{equation}
with $b^\e(x)= a_{ij}^\e(\nabla_\e v_\e (x)+\nabla_\e \varphi(x) ) X_i^\e X_j^\e\varphi(x).$
We define our (weakly) parabolic operator for which the function $v_\e$ is  a solution
\begin{equation}\label{boundary 1.2}
Q(v)= a_{ij}^\e (\nabla_\e v_\e+\nabla_\e \varphi) X_i^\e X_j^\e v_\e   +b^\e -\p_t v.
\end{equation}

In the following we construct for each point $p_0=(x_0,t_0)\in \p\Om \times (0,T)$ a {\it barrier function} for $Q, v_\e$:  i.e.,

\begin{lemma}\label{barrier-lemma}
Let  $G$ be a Lie group free up to step two and  $\Om\subset G$  convex in the sense of Definition \ref{defconvex}. For  each point $p_0=(x_0,t_0)\in \p\Om \times (0,T)$ and for every $\e>0$ there exist a parabolic neighborhood $V_\e$ of $p_0$ and a positive function $w_\e\in C^2(Q)$ such that
\begin{equation}\label{boundary 1.3} Q(w_\e)\le 0 \text{ in }V_\e\cap Q 
\text{ with } w_\e\ge v_\e\text{ in }\p_pV_\e\cap Q.
\end{equation}
\end{lemma}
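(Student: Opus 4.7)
The plan is to construct $w_\e$ as a concave increasing function of a linear-in-exponential-coordinates supporting function for $\Om$ at $x_0$. Let $\Phi_{x_0}:U\to V$ be the canonical coordinate map of Theorem \ref{rs-5}; by Definition \ref{defconvex}, there is a linear function $\ell$ (in the exponential basis $Y_1,\ldots,Y_n$ of the free step-two Carnot group $G_{m,2}$) with $\ell(0)=0$ and $\ell\ge 0$ on $\Phi_{x_0}^{-1}(\Om\cap V)$. Pulling back, I define $\tilde\ell := \ell\circ\Theta_{x_0}$, so that $\tilde\ell$ vanishes at $x_0$ and $\tilde\ell\ge 0$ on $\Om\cap V$. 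The barrier is then
\[
w_\e(x,t):=\psi(\tilde\ell(x)),
\]
with $\psi:[0,\infty)\to\R$ smooth, increasing, concave, and $\psi(0)=0$, to be chosen below.

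Since $w_\e$ is independent of $t$, the required inequality $Q(w_\e)\le 0$ reduces to
\[
\psi''(\tilde\ell)\, a_{ij}^\e(\xi)\, X_i^\e \tilde\ell\, X_j^\e \tilde\ell \;+\; \psi'(\tilde\ell)\, a_{ij}^\e(\xi)\, X_i^\e X_j^\e \tilde\ell \;+\; b^\e \;\le\; 0,
\]
with $\xi=\psi'(\tilde\ell)\nabla_\e\tilde\ell+\nabla_\e\varphi$. The crux is the middle summand. Using the Rothschild--Stein splitting $d\Theta_{x_0}(X_i)=Y_i+R_i$ from Theorem \ref{rs-5}(b), together with the vanishing of the coefficients $\sigma_{ih}$ of $R_i$ at $0$ to order at least $d(X_h)$, one can write
\[
a_{ij}^\e(\xi)\, X_i^\e X_j^\e \tilde\ell \;=\; \bigl[a_{ij}^\e(\xi)\, Y_i^\e Y_j^\e \ell\bigr]\!\circ\!\Theta_{x_0} \;+\; \mathcal{E}_\e(x),
\]
where $Y_i^\e$ denotes the natural $\e$-rescaled frame on $G_{m,2}$ and $\mathcal{E}_\e$ is bounded on $V$ uniformly in $\e$ and vanishes as $x\to x_0$. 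By Lemma \ref{planes are flat} applied in the step-two Carnot group $G_{m,2}$, the matrix $Y_i^\e Y_j^\e \ell$ is antisymmetric; its contraction with the symmetric $a_{ij}^\e$ is identically zero. Thus the middle summand collapses to $\psi'\cdot\mathcal{E}_\e$.

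With this reduction the parameter selection is now standard. A direct computation with the explicit form of $a_{ij}^\e$ gives a negative bound on the first summand of order $\psi''|\nabla_\e\tilde\ell|^2/\bigl(1+(\psi')^2|\nabla_\e\tilde\ell|^2+C_\varphi\bigr)$; choosing, for instance, $\psi(s)=As-\tfrac{1}{2}Bs^2$ with $B$ large and $A$ proportionate, and shrinking the parabolic neighborhood to $V_\e:=\{x\in V:\tilde\ell(x)<\delta\}\times(t_0-\delta,t_0+\delta)\cap Q$ for $\delta$ small enough that $|\mathcal{E}_\e|$ is absorbed, yields $Q(w_\e)\le 0$ in $V_\e$. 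The boundary inequality $w_\e\ge v_\e$ on $\p_p V_\e\cap Q$ splits into two pieces: on $\{\tilde\ell=\delta\}$ one chooses $\psi(\delta)\ge\sup_Q|v_\e|$, a quantity bounded independently of $\e$ via Lemma \ref{ESth32bis} applied against constant super/subsolutions; on the initial slice $v_\e\equiv 0\le w_\e$.

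The main technical obstacle is that $a^\e$ is uniformly elliptic only transverse to $\xi\approx\psi'\nabla_\e\tilde\ell$, so a naive bound on $a_{ij}^\e X_i^\e X_j^\e \tilde\ell$ would depend on the unknown gradient of the solution through the coefficients. The cancellation $a_{ij}^\e Y_i^\e Y_j^\e\ell\equiv 0$ furnished by Lemma \ref{planes are flat}, transplanted from $G_{m,2}$ to $G$ via Rothschild--Stein, is precisely what renders the barrier estimate insensitive to $|\nabla_\e u_\e|$; the subtlety is then to verify that the remainder $\mathcal{E}_\e$ is genuinely $\e$-uniform, which requires tracking the $\e$-scalings through the expansion $R_i=\sum_h\sigma_{ih}(u)X_h$, using that higher-degree $X_h$ are paired with $\sigma_{ih}$ of correspondingly higher-order vanishing.
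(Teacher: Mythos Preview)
Your approach matches the paper's in all essentials: the barrier is $\psi$ composed with a linear-in-exponential-coordinates supporting function, the crucial cancellation $a_{ij}^\e Y_i^\e Y_j^\e \ell = 0$ comes from Lemma~\ref{planes are flat} applied in the osculating free group, and the transfer between $G_{m,2}$ and $G$ uses the Rothschild--Stein remainder. The paper organizes the computation slightly differently---it first shows $Q_Y(\tilde w_\e)\le -C(G)\e^2$ for the operator $Q_Y$ built from the $Y_i^\e$ frame on $G_{m,2}$, and only then pulls back via $\Theta_{x_0}$, using the strict negativity to absorb the Rothschild--Stein remainder on a small $\e$-dependent neighborhood---but this is equivalent to your decomposition with $\mathcal E_\e$.

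There is, however, a genuine gap in your choice of $\psi$. The quadratic $\psi(s)=As-\tfrac{1}{2}Bs^2$ is bounded above by $A^2/(2B)$, so the lateral boundary requirement $\psi(\delta)\ge \sup_Q|v_\e|=:M$ forces $A^2/(2B)\ge M$. On the other hand, when the supporting plane has a nontrivial vertical component one only has $|\nabla_\e\tilde\ell|^2\gtrsim \e^2$ (this is exactly the lower bound the paper records in \eqref{boundary 1.8}), and the supersolution inequality then requires $-\psi''/(\psi')^2\gtrsim \e^{-2}$ to dominate the $O(1)$ term $b^\e$; for the quadratic this means $B/A^2\gtrsim \e^{-2}$. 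Combining the two constraints gives $M\lesssim \e^2$, which fails for small $\e$. The paper avoids this obstruction by taking $\psi(s)=\frac{1}{\nu}\log(1+ks)$, which satisfies $\psi''+\nu(\psi')^2=0$ identically and is \emph{unbounded} in $k$: one chooses $\nu=\nu(G,\e,\varphi)$ large (of order $\e^{-2}$) to secure the supersolution property, and then $k$ large independently to meet the boundary condition. A secondary issue: your parabolic neighborhood should extend down to $t=0$ (as the paper's $V=O\times(0,T)$ does), since on a bottom slice $\{t=t_0-\delta\}$ with $t_0-\delta>0$ one cannot assert $v_\e\equiv 0$.
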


\begin{proof} For every $x_0\in \p \Om$ we can select exponential coordinates locally around the point $x_0$.
The point $x_0$ has coordinates $0$ in the variables $u$.

In these coordinates there exists an hyperplane $P$ tangent to the open set $\Omega$ defined by an equation of the form $\Pi(u)=\sum_{i=1}^n a_i u_i=0$ with $\Pi>0$ in $\Om$, $\Pi(0)=0$,
and normalized as $\sum_{d(i)=1,2}a_i^2=1$.
Following the standard argument (see for instance \cite[Chapter 10]{lieberman}) we select the barrier at $(x_0,t_0)\in \p \Om\times(0,T)$ independent of time with
\begin{equation}\label{boundary 1.4}
\tilde w_\e= \psi (\Pi)
\end{equation}
with $\psi$ solution of \begin{equation}\label{ode}\psi''+\nu (\psi')^2=0,
\end{equation}
 in particular
\begin{equation}\label{ode-sol}\psi(s)= \frac{1}{\nu} \log (1+ks),
\end{equation} with $k$ and $\nu$ chosen appropriately so that conditions
\eqref{boundary 1.3} will hold. We choose a neighborhood $V=O\times (0,T)$ such that $P\cap O\cap\p\Om=\{x_0\}$. By an appropriate choice of $k$ sufficiently large we can easily obtain $\tilde w_\e(0)=0 $ and $\tilde w_\e\circ \Theta_{x_0} \ge v_\e \text{ in }\p_pV\cap Q.$

We denote by  $Q_Y$ the operator which has the same expression of $Q$, but with respect to the left invariant osculating frame $\{Y_i^\e\}_{i=1,...,n}$ in the nilpotent osculating free group $G_{m,2}$, i.e. 
$Q_Y(v)= a_{ij}^\e (\nabla_{Y,\e} v_\e+\nabla_{Y,\e} \varphi) Y_i^\e Y_j^\e v_\e   +b^\e -\p_t v.$

To estimate $Q_Y(\tilde w_\e)\le 0$ we begin by observing that $\tilde w_\e$ satisfies
\begin{equation}\label{boundary 1.5}
Q_Y(\tilde w_\e)= \psi' a_{ij}^\e (\nabla_{Y, \e} \tilde w+\nabla_{Y,\e} \varphi) Y_i^\e Y_j^\e \Pi + 
\frac{\psi''}{(\psi')^2}\F +b_\e,
\end{equation}
with $\F= a_{ij}^\e (\nabla_{Y,\e} \tilde w_\e+\nabla_{Y, \e} (\varphi\circ \Phi_{x_0}) Y_i^\e \tilde w_\e Y_j^\e\tilde  w_\e.$

We first note that 

\begin{equation}\label{boundary 1.6}
     a_{ij}^\e \Big(\nabla_{Y, e} \tilde w_\e+\nabla_{Y, \e} (\varphi\circ \Phi_{x_0})\Big) (Y_i^\e Y_j^\e )\Pi  =0 \end{equation}
	as $ a_{ij}^\e$ is symmetric and
$X_i^\e X_j^\e \Pi$ is anti-symmetric in view of Lemma \ref{planes are flat}. 
We can now estimate the remaining terms of (\ref{boundary 1.5})
\begin{equation}
  \frac{\psi''}{(\psi')^2}\F +b_\e   \label{Claim 2}
\end{equation}

in a parabolic neighborhood of $u=0$.
We first note that  Lemma   \ref{planes are flat} implies  $$\frac{\e^2}{2}\le  \max(\sum_{d(i)=1} a_i^2, \e^2 \sum_{d(k)=1} a_k^2) \le |\nabla_{Y,\e} \Pi |=$$$$=\sum_{d(i)=1} \bigg( a_i + \sum_{d(k)=2, d(j)=1} c^k_{ij} a_k x_j\bigg)^2+ \e^2 \sum_{d(k)=2} a_k^2 \le C(G) (1 + \e^2), $$ for some constant $C(G)>0$.
Consequently, for $\psi'>>1$ sufficiently large one finds
\begin{equation}\label{boundary 1.8}
\F\ge \frac{|\nabla_{Y, \e} \tilde w_\e|^2}{1+|\nabla_{Y,\e} \tilde w_\e+\nabla_{Y,\e} \varphi|^2} 
\ge C(G) \frac{|\nabla_{Y,\e} \tilde w_\e|^2}{1+|\psi'|^2 +|\nabla_{Y,\e} \varphi|^2} \ge C(G) \e^2>0,
\end{equation}
with $C(G)>0$ a constant depending only on $G$ (not always the same along the chain of inequalities). In view of the definition of $b_\e$ and \eqref{ode} with an appropriate choice of $\nu=\nu(G,\e, \phi)>0$ and $k=k(G,\phi)>>1$ in \eqref{ode-sol}, we conclude
\begin{equation}\label{boundary 19}
 \frac{\Phi''}{(\Phi')^2}\F +b_\e \le \bigg(  \frac{\psi''}{(\psi')^2} +\nu-1\bigg) \F\leq - C(G) \e^2.
\end{equation}
It follows that 
$$Q_Y(\tilde w_\e)\leq - C(G) \e^2.$$
To conclude, we set
% $w_\e= \tilde w_\e \circ \Phi_{x_0}$. 
$w_\e=\tilde w_\e \circ \Theta _{x_0}$.
In view of  the relation \eqref{split} between the vector fields $X$ and $Y$,  it is now immediate to see that
there exists a neighborhood  $V_\e$ of $p_0$, depending  on $\e$, such that 
$$Q(w_\e)\leq 0 \text{ in }V_\e\cap Q. $$
\end{proof}

%\red{Ho messo la proposizione relativa alle stime al bordo}

\begin{prop}\label{boundarygradientestimate}
Let  $G$ be a Lie group free up to step two, $\Om\subset G$  convex in the sense of Definition \ref{defconvex}
and $\varphi\in C^2(\bar\Om)$. For $\e>0$ denote by  $u_\e\in C^2(\Om\times(0,T))\cap C^1(\bar\Om\times (0,T))$ the non-negative unique  solution of the initial value problem \eqref{ivp}. There exists $C=C(G, ||\varphi||_{C^2(\bar\Om)})>0$ such that
 \begin{equation}\label{b-1.1}
 \sup_{\p\Om\times(0,T)}|\nabla_\e u_\e| \le  \sup_{\p\Om\times(0,T)}|\nabla_1 u_\e| \le C.
 \end{equation}

\end{prop}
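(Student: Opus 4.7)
The plan is to use the barrier function from Lemma \ref{barrier-lemma} to sandwich $v_\e := u_\e-\varphi$ in a parabolic neighborhood of each point of $\partial\Omega\times (0,T)$, and then translate the sandwich into a pointwise bound on the full Riemannian gradient. Fix $p_0=(x_0,t_0)\in\partial\Omega\times(0,T)$. Lemma \ref{barrier-lemma} provides a parabolic neighborhood $V_\e$ of $p_0$ and a function $w_\e\in C^2$ with $Q(w_\e)\le 0$ on $V_\e\cap Q$ and $w_\e\ge v_\e$ on $\partial_p V_\e\cap Q$. Since $u_\e=v_\e+\varphi$ and $w_\e+\varphi$ are, respectively, a solution and a supersolution of the quasilinear PDE in \eqref{ivp}, the comparison principle (Lemma \ref{ESth32bis}) applied on $V_\e\cap Q$ yields $v_\e\le w_\e$ on $V_\e\cap Q$. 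Repeating the construction of Lemma \ref{barrier-lemma} with $-\varphi$ in place of $\varphi$ produces a symmetric lower barrier $\underline w_\e\ge 0$ satisfying $-\underline w_\e\le v_\e$ on the corresponding neighborhood, so that altogether $-\underline w_\e\le v_\e\le w_\e$ near $p_0$.

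At $p_0$ all three functions vanish, so that in the exponential coordinates $u$ centered at $x_0$ we have $|v_\e(u,t_0)|\le \max(w_\e,\underline w_\e)\le \psi(\Pi(u))=\tfrac{1}{\nu}\log(1+k\Pi(u))$, where $\Pi$ defines the supporting hyperplane of Definition \ref{defconvex}. Differentiating this estimate inward along any direction $\vec e$ at $x_0$ with a strictly positive component along $\nabla\Pi$, and using that $v_\e(x_0,t_0)=0$, produces a one-sided bound on the inward directional derivative of $v_\e$ controlled by $\psi'(0)|\nabla\Pi|_{\sigma_1}=(k/\nu)|\nabla\Pi|_{\sigma_1}$. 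This yields a uniform bound on the outward $\sigma_1$-normal derivative $\partial_\nu u_\e(p_0)$. The tangential components of $\nabla_1 u_\e$ at $p_0$ are controlled for free: the identity $u_\e=\varphi$ on $\partial\Omega\times(0,T)$ implies that any derivative of $u_\e$ along a vector tangent to $\partial\Omega$ agrees at $p_0$ with the corresponding derivative of $\varphi$, hence is bounded by $\|\varphi\|_{C^1(\bar\Omega)}$. Decomposing $\nabla_1 u_\e(p_0)$ into its $\sigma_1$-tangential and $\sigma_1$-normal parts therefore gives the estimate $|\nabla_1 u_\e(p_0)|\le C$ with $C=C(G,\|\varphi\|_{C^2(\bar\Omega)})$; since $p_0$ was arbitrary and $|\nabla_\e u_\e|\le|\nabla_1 u_\e|$ holds pointwise from the definition of $\sigma_\e$, the proposition follows.

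The main obstacle in this plan is precisely the claim of uniformity in $\e$ of the Lipschitz constant inherited from $w_\e$ at $x_0$, since the parameters $\nu=\nu(G,\e,\varphi)$ and $k=k(G,\varphi)$ entering $\psi$ in Lemma \ref{barrier-lemma} were fine-tuned in an $\e$-dependent way to absorb the inhomogeneous term $b^\e$. One must therefore track these dependencies carefully through the chain of inequalities culminating in \eqref{boundary 19} and verify that $k/\nu$ remains bounded by a quantity depending only on $G$ and $\|\varphi\|_{C^2(\bar\Omega)}$; alternatively, one can enlarge $k$ so as to decouple it from the $\e$-dependence of $\nu$, at the cost of shrinking the neighborhood $V_\e$, which is harmless since the gradient bound is pointwise at $p_0$.
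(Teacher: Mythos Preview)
Your approach is essentially the same as the paper's: apply the comparison principle (Lemma~\ref{ESth32bis}) with the barrier of Lemma~\ref{barrier-lemma} to control the difference quotient of $v_\e=u_\e-\varphi$ at each boundary point, and read off a bound on the full $\sigma_1$-gradient there. The paper compresses all of this into a single inequality
\[
0\le \frac{v_\e(x,t)}{\mathrm{dist}_{\sigma_1}(x,x_0)}\le \frac{w_\e(x,t)}{\mathrm{dist}_{\sigma_1}(x,x_0)}\le C(k,\nu),
\]
whereas you spell out the lower barrier and the tangential/normal decomposition explicitly; these additions are correct and in fact clarify points the paper leaves implicit (the paper's lower bound $0\le v_\e$ is not justified without a matching barrier from below).

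Your closing concern about the $\e$-dependence of $k/\nu$ is legitimate, and the paper does not resolve it either: its proof ends with a constant $C(k,\nu)$ while Lemma~\ref{barrier-lemma} records $\nu=\nu(G,\e,\varphi)$. If one traces the argument leading to \eqref{boundary 19}, one sees that $\nu$ must be taken large (of order $\e^{-2}$) to absorb $b_\e$ against the lower bound $\mathcal F\ge C(G)\e^2$, so $\psi'(0)=k/\nu$ actually \emph{decreases} as $\e\to 0$ and $C(k,\nu)$ stays bounded. This is the computation you would need to carry out; the paper simply asserts the conclusion.
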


\begin{proof}
In view of Lemma \ref{ESth32bis}, a comparison with the barrier constructed above yields that
\begin{equation}\label{boundary 20}
0\le \frac{v_\e (x,t)}{dist_{\sigma_1}(x,x_0)} \le  \frac{w_\e(x,t)}{{dist_{\sigma_1}(x,x_0)}} \le C(k,\nu),
\end{equation}
in $V_\e\cap Q$, with $dist_{\sigma_1}(x,x_0)$ being the distance between $x$ and $x_0$ in the Riemannian metric  $\sigma_1$,  concluding the proof of the boundary gradient estimates at the point $p_0$.
\end{proof}

%%%%%%%%%%%%%%%%%%%%%%%%%%%%%%%%%%%%%%%%%%%%%%%
%%%%%%%%%%%%%%%%%%%%%%%%%%%%%%%%%%%%%%%%%%%%%%%
%%%%%%%%%%%%%%%%%%%%%%%%%%%%%%%%%%%%%%%%%%%%%%%
%%%%%%%%%%%%%%%%%%%%%%%%%%%%%%%%%%%%%%%%%%%%%%%

%%%%%%%%%%%%%%%%%%%%%%%%%%%%%%%%%%%%%%%%%%%%%%%
%%%%%%%%%%%%%%%%%%%%%%%%%%%%%%%%%%%%%%%%%%%%%%%

\section{Regularity properties in the $C^{k, \alpha}$ spaces}

In this section we will prove uniform estimates for solution of \eqref{pdee}  in the $C^{k,\alpha}_{\e,X}$  H\"older spaces.  This is accomplished  in two steps and follows a strategy originally introduced by Trudinger (see notes in \cite[Chapter 7]{lieberman}. 
First we establish $C^{1, \alpha}$ regularity, 

\subsection{Regularity properties in the $C^{1, \alpha}$ spaces}

\begin{rmrk}
Let $u$ be a smooth solution of  (\ref{pdee}). Note that the function $v_k=X_k^r u$ is then a solution of the equation 
\begin{equation}\label{RG-1}
-\partial_t v_k +\sum_{i,j=1}^n X_i^\e \Big(a_{ij}^\e (\nabla _\e u) X_j^\e v_k\Big) 
+ \sum_{i,j,h=1}^n a^{i,j,h}  X_i^\e X_j^\e u_\e X_h^\e v_k=0,\end{equation}
 where $$a^{i,j,h}=\frac{\partial a^\e_{ij}}{\partial p_h}  
 - \frac{\partial a^\e_{ih}}{\partial p_j}$$
\end{rmrk}
Indeed taking the derivative of equation (\ref{pdee}) and taking into account that the right and left derivatives commute, one obtains
$$-\partial_t X^r_ku_\e +  \sum_{i,j,h=1}^n\frac{\partial a^\e_{ij}}{\partial p_h} (\nabla _\e u) X_i^\e X_j^\e u_\e  X_h^\e X^r_ku_\e+ \sum_{i,j=1}^n a_{ij}^\e (\nabla _\e u)  X_i^\e X_j^\e  X^r_ku_\e =0 $$
Consequently 
$$-\partial_t v_k + \sum_{i,j=1}^nX_i^\e \Big(a_{ij}^\e (\nabla _\e u) X_j^\e v_k\Big) 
+ \sum_{i,j,h=1}^n \frac{\partial a_{ij}^\e}{\partial p_h}  
X^\e_iX^\e_j u_\e X_h^\e v_k - \sum_{i,j,h=1}^n \frac{\partial a_{ij}^\e}{\partial p_h}X_j^\e X_k^r u_\e X_i^\e X_h^\e u_\e=0$$
and the latter is equivalent to \eqref{RG-1}.
%$$-\partial_t v_k + \sum_{ij} X_i \Big(a_{ij} (\nabla _\e u) X_i^\e v_k\Big) 
%+ \sum_{ijm} a^{i,j,m}  X_i X_ju_\e X_m^\e v_k=0$$

\begin{rmrk}
Starting \eqref{RG-1} one can immediately see that the 
function  $z= |\nabla^r_{\e}u_\e|^2$ is solution of 
$$-\partial_t z + \sum_{i,j=1}^nX_i^\e \Big(a_{ij}^\e (\nabla _\e u) X_j^\e z\Big) + \sum_{i,j,h=1}^n a^{ijh}X_i^\e X_j^\e u_\e X_h^\e z 
- 2 \sum_{i,j,h,k=1}^n a_{ij}^\e X_i^\e v_k X^\e_j v_k =0$$
\end{rmrk}
%
%Let us explicitly note that the right derivatives 
%$X^{1, r}_k$ are a basis of the tangent space, as well as 
%$X^{1}_k$, so that it is possible to represent 
%each family of vector fields as linear combination of the other. 
%In particular, in the Carnot setting, it has been proved by \cite{Roth:Stein}
%that there exists homogenous polynomial such that 
%
%\begin{equation}\label{lr}X^{1}_k = \sum_j c_{kj} X^{1, r}_j\end{equation}
%In the general Lie groups setting this assertion is true only locally. 
%and  the functions  $c_{ij}$ are polynomials in the 
%local exponential variables independent of $\e$. 
%
%

\begin{lemma}\label{remarkstep1}
for every $k=1, \ldots, n$ and for every  $\delta>0$ 
the functions 
$$w_k^{\pm} = \pm v_k + \delta z$$
satisfy the inequality
$$-\partial_t w_k^{\pm}  + \sum_{i,j=1}^n X_i^\e \Big(a_{ij} ^\e(\nabla _\e u) X_j^\e w_k^{\pm}\Big)\geq - C_0 |\nabla_\e w_k^{\pm}|^2 - C_1,$$
for suitable constants $C_0$ and $C_1$. 
\end{lemma}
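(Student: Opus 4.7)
The plan is to derive the stated inequality directly at the level of the two equations displayed in the preceding remarks. Multiplying the equation for $v_k$ by $\pm1$, the equation for $z$ by $\delta$, and summing, the divergence-form parts combine linearly in $w_k^\pm=\pm v_k+\delta z$, leaving
\begin{equation*}
-\partial_t w_k^\pm + \sum_{i,j=1}^n X_i^\e\!\bigl(a_{ij}^\e X_j^\e w_k^\pm\bigr) \;=\; -\sum_{i,j,h=1}^n a^{ijh}\, X_i^\e X_j^\e u_\e\, X_h^\e w_k^\pm \;+\; 2\delta \sum_{l=1}^n \sum_{i,j=1}^n a_{ij}^\e X_i^\e v_l\, X_j^\e v_l,
\end{equation*}
where the dummy index previously called $k$ in the second remark has been renamed $l$ to avoid collision with the fixed $k$. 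The last double sum is the \emph{good term}: by the explicit form of $a_{ij}^\e$ in \eqref{defaij} together with the uniform Lipschitz bound of Theorem \ref{global estimates}, one has $a_{ij}^\e(\nabla_\e u_\e)\xi_i\xi_j\ge \lambda_0|\xi|^2$ with $\lambda_0>0$ independent of $\e$, so this contribution is bounded below by $2\delta\lambda_0\sum_l|\nabla_\e v_l|^2$.

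The hard part will be absorbing the first term on the right, which carries second-order derivatives of $u_\e$ that are not a priori controlled. To handle it I would invoke \eqref{lr}: since each $X_j^\e u_\e$ is a linear combination $\sum_l c_{jl}^\e v_l$ of right-invariant derivatives, with coefficients $c_{jl}^\e$ that are smooth and locally bounded uniformly in $\e$ (in the general Lie group case this is a local statement, as noted in the text right after \eqref{lr}), applying $X_i^\e$ once more gives
\begin{equation*}
|X_i^\e X_j^\e u_\e| \;\le\; C\Bigl(1+\sum_{l=1}^n|\nabla_\e v_l|\Bigr),
\end{equation*}
with $C$ independent of $\e$. The coefficient $a^{ijh}$ is a smooth function of the bounded quantity $\nabla_\e u_\e$, hence also uniformly bounded.

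With these two facts in hand, Young's inequality yields
\begin{equation*}
\Bigl|\sum_{i,j,h} a^{ijh}\, X_i^\e X_j^\e u_\e\, X_h^\e w_k^\pm\Bigr| \;\le\; \eta\sum_{l=1}^n|\nabla_\e v_l|^2 \;+\; \frac{C}{\eta}\,|\nabla_\e w_k^\pm|^2 \;+\; C_\eta .
\end{equation*}
Choosing $\eta$ so that $\eta<2\delta\lambda_0$ (for instance $\eta=\delta\lambda_0$) causes the first summand to be absorbed by the good term, and what remains is exactly an inequality of the form $-C_0|\nabla_\e w_k^\pm|^2-C_1$, with $C_0=C/(\delta\lambda_0)$ and $C_1$ depending on $\delta$, $\lambda_0$, and the $C^1$ bound on $u_\e$ supplied by Theorem \ref{global estimates}. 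The only genuine delicacy is the step just described, namely verifying that the change-of-basis formula \eqref{lr} between left- and right-invariant frames admits bounds uniform in $\e$ in the present (possibly non-nilpotent) Lie group setting; the rest is rearrangement and Young's inequality.
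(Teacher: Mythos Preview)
Your proof is correct and follows essentially the same route as the paper: combine the equations from the two remarks, use ellipticity of $a_{ij}^\e$ to extract the good term $2\delta\lambda\sum_l|\nabla_\e v_l|^2$, use \eqref{lr} to control $|X_i^\e X_j^\e u_\e|$ by $C(1+\sum_l|\nabla_\e v_l|)$, and absorb via Young's (the paper says Schwarz's) inequality with parameter $\eta=\delta\lambda$. Your remark that the only genuine delicacy is the uniform-in-$\e$ validity of the change of basis \eqref{lr} is exactly the point the paper flags as well.
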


\begin{proof}
For simplicity we temporarily drop the $\pm$ superscript.
Adding the equations satisfied by $v_k$ and $z$ we see that for every $k=1, \ldots, n$ and for every  $\delta>0$ 
the functions 
$w_k^{\pm} = \pm v_k + \delta z$
satisfy 
\begin{equation}\label{RG-2} 
-\partial_t w_k + \sum_{i,j=1}^n X_i^\e \Big(a_{ij}^\e (\nabla _\e u) X_j^\e w_k\Big) = - \sum_{i,j,h=1}^n a^{ijh}X_i^\e X_j^\e u_\e X_h^\e w_k
+ 2 \delta \sum_{i,j,s=1}^n a^\e_{ij} X_i^\e v_s X^\e_j v_s \end{equation}
$$\geq 2 \delta \lambda \sum_{s=1}^n |\nabla_\e X_s^{1, r} u_\e|^2 - \sup_{i,j,h} |a^{ijh}| |X_i^\e X_j^\e u_\e| |\nabla_\e w_k|, $$
where $\lambda>0$ is the smallest eigenvalue of $a_{ij}^\e$ and is independent of $\e>0$.
Using the notation introduced in  (\ref{lr}) we deduce that 
$$|X_i^\e X_j^\e u_\e| = |\sum_{j,s=1}^n  X_i^\e (c_{sj}X_s^{1, r} u_\e)| \leq 
%$$$$ |\sum_{j}  X_i^\e (c_{sj}X_j^{r, 1} u_\e)|
  C_2 + C_3\sum_{s=1}^n |\nabla_\e X_s^{1,r} u_\e|.$$
Consequently, using Schwarz's inequality,
$$-\partial_t w_k + \sum_{ij}X_i \Big(a_{ij}^\e (\nabla _\e u) X_j^\e w_k\Big)\geq$$
$$\geq 2 \delta \lambda  \sum_{s=1}^n |\nabla_\e X_s^{1, r} u_\e|^2 -  \sum_{s=1}^n \delta \lambda |\nabla_\e X_s^1 u_\e|^2  
- C_0 |\nabla_\e w_k|^2 - C_1, $$
completing the proof. \end{proof}

Next we set $( x_0, t_0)\in Q=\Om\times (0,T)$ and for $r>0$, let  $Q_\e(r)=\{(x,t)\in Q | d_\e(x,x_0)<r $ and $|t-t_0|\le r^2\}$. Define 
$$W^\pm_k = sup_{Q_\e(4r)} w^\pm_k$$ 
and observe that 
$$\p_t (W^\pm_k-w^\pm_k) - \sum_{i,j=1}^n X_i^\e \Big(a_{ij}^\e(\nabla _\e u) X_j^\e (W^\pm_k - w_k^{\pm})\Big)\geq - C_0 |\nabla_\e (W^\pm_k-w_k^{\pm})|^2 - C_1.$$ 
In order to invoke the weak Harnack inequality and derive the $C^\alpha$ estimates, we need to eliminate the quadratic term on the right hand side. Following \cite[Chapter 12, Sec. 3]{lieberman} we define   
$$\bar w_k = \frac{\lambda}{2C_0} \Big(1 - exp(\frac{2C_0}{\lambda}  (w^\pm_k - W^\pm_k))\Big)$$
and observe that this new functions satisfies 
$$-\partial_t \bar w_k  + \sum_{ij}X_i^\e \Big(a_{ij}^\e (\nabla _\e u) X_j^\e \bar w_k\Big) +  g \leq 0,$$
where $  g =C_1(\frac{2C_0}{\lambda} \bar w_k + 1)$, for the constants $\lambda, C_0, C_1$ from Lemma \ref{remarkstep1}.  In view of the weak Harnack inequality \cite[Proposition 7.6]{CCstein}, one has that for some constant $C_4>0$ independent of $\e$ and for $Q_\e^-(r) =\{(x,t)\in Q| \ d_\e(x,x_0)<r$ and $t_0-3r^2<t<t_0<2t^2\}$, 
$$\int_{Q_\e^-(r)} \bar w_k \ dx dt \le C_4(\inf_{Q_\e(r)} \bar w_k  +\sup_{Q_\e(r)} |g| r^2).$$
Following the argument  in \cite[Chapter 12, Sec. 3]{lieberman}
we obtain
\begin{prop} Let $u_\e$ be a solution of the mean curvature flow PDE \eqref{pdee} in $Q=\Om\times (0,T)\subset G\times \R$. 
Let $K$ be a compact sets such that  $K\subset\subset {Q}$,  set $2\delta=d_0(K, \p_p Q)$ and
denote by $K_\delta$ the $\delta-$tubular neighborhood of $K$ in $d_0$.
There exists  constants $C>0$  and $\alpha \in (0,1)$ depending  on
 $\delta$ and on the Lipschitz norm of $u$ in $K_\delta$,
but independent of $\e$,  such that
$$||u_\e||_{C^{1, \alpha}_{\e,X}(K)} \leq C . $$
\end{prop}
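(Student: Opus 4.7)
The plan is to apply the weak Harnack inequality from \cite[Proposition 7.6]{CCstein} to the non-negative functions $\bar w_k^\pm$ constructed via the exponential substitution immediately above, derive an oscillation-decay estimate for the right-invariant derivatives $v_k = X_k^r u_\e$, iterate geometrically to obtain H\"older continuity of each $v_k$ uniformly in $\e$, and finally transfer this to H\"older continuity of the horizontal left-invariant derivatives $X_i^\e u_\e$ using the local representation \eqref{lr}.

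Fix $(x_0,t_0)\in K$ and choose $r>0$ small enough (independently of $\e$, since $d_\e\le d_0$) that $Q_\e(4r)\subset K_\delta$. The global Lipschitz bound of Theorem \ref{global estimates} makes $|w_k^\pm - W_k^\pm|$ uniformly bounded on $Q_\e(4r)$, so the exponential transform $\bar w_k^\pm$ is a non-negative, uniformly bounded supersolution of a linear degenerate parabolic equation in divergence form whose zero-order forcing $g$ is $\e$-uniformly bounded. Invoking the weak Harnack inequality -- with constant independent of $\e$ thanks to the stability of the homogeneous structure (Proposition \ref{homog-stab}) and of the associated Poincar\'e inequality -- yields
\[
\int_{Q_\e^-(r)} \bar w_k^\pm\,dx\,dt \le C\Big(\inf_{Q_\e(r)} \bar w_k^\pm + r^2\Big).
\]
Since the map $s\mapsto \tfrac{\lambda}{2C_0}(1-e^{2C_0 s/\lambda})$ is bi-Lipschitz on the bounded range of $w_k^\pm - W_k^\pm$, the same inequality holds (with new $\e$-uniform constants) for $W_k^\pm - w_k^\pm$ in place of $\bar w_k^\pm$.

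Summing the $+$ and $-$ versions and using $(W_k^+ - w_k^+)+(W_k^- - w_k^-)=\operatorname{osc}_{Q_\e(4r)} v_k + 2\delta(\sup z - z)$, together with the uniform gradient bound to absorb the $\delta z$ contribution into the constant, one runs the standard dichotomy argument (cf.\ \cite[Ch.~12, \S 3]{lieberman}): on the half of $Q_\e^-(r)$ where $v_k$ stays away from one of the extrema of the larger cylinder, the integral of $W_k^+ - w_k^+$ (or of $W_k^- - w_k^-$) is bounded below by a fixed fraction of $\operatorname{osc}_{Q_\e(4r)} v_k$. Combining this lower bound with the weak-Harnack upper bound produces
\[
\operatorname{osc}_{Q_\e(r)} v_k \le \theta\,\operatorname{osc}_{Q_\e(4r)} v_k + Cr^2,
\]
for some $\theta\in(0,1)$ independent of $\e$. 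Iterating on dyadic scales in the usual way gives $v_k \in C^\alpha_{\e,X}(K)$ with $\e$-uniform norm for each $k=1,\dots,n$.

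Finally, from \eqref{lr} one has $X_i^\e u_\e = \sum_j c_{ij}(x)\,v_j$ for $i\le m$, where the $c_{ij}$ are smooth polynomials in the exponential coordinates, independent of $\e$. Smooth functions are Lipschitz in $d_\e$ with Lipschitz constant uniform in $\e$ on compact sets, so $c_{ij}\in C^\alpha_{\e,X}(K)$ with $\e$-uniform norm; hence the products $c_{ij}v_j\in C^\alpha_{\e,X}(K)$, which gives $u_\e\in C^{1,\alpha}_{\e,X}(K)$ as required. The main obstacle is to ensure that \emph{every} constant appearing in this chain -- the weak-Harnack constant, the bi-Lipschitz bounds of the exponential substitution, the doubling and Poincar\'e constants, and the coefficients $c_{ij}$ -- yields an $\e$-uniform estimate; this uniformity is exactly what Section~2 and the Rothschild--Stein osculation Theorem \ref{rs-5} are designed to provide.
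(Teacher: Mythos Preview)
Your proposal is correct and follows essentially the same approach as the paper: the paper itself, after applying the weak Harnack inequality to $\bar w_k$, simply writes ``Following the argument in \cite[Chapter 12, Sec.~3]{lieberman} we obtain'' the Proposition, and your write-up is precisely a fleshed-out version of that referenced argument. One small quibble: the identity $(W_k^+ - w_k^+)+(W_k^- - w_k^-)=\operatorname{osc}_{Q_\e(4r)} v_k + 2\delta(\sup z - z)$ is not exact (the suprema of $\pm v_k+\delta z$ need not be attained where $z$ is maximal), but since you immediately absorb the bounded $\delta z$ contribution into the constants this does not affect the oscillation-decay conclusion.
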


\subsection{Regularity properties in the $C^{k, \alpha}$ spaces}

Once obtained the  interior $C^{1, \alpha}$ estimate of the solution uniform in $\e$, we  
write the  mean curvature flow equation in non divergence form: 
$$\p_t u- \sum_{i,j=1}^n  a^\e_{ij}(x,t) X_i^\e X_j^\e u=0 ,$$
Applying Schauder estimates (see \cite{CCM3} for the Carnot groups setting and 
\cite{CCstein} for the general Lie group case). we immediately deduce the proof of Theorem 
\ref{global in time  existence results}. 

\begin{proof}
Since the solution is of class $C^{1, \alpha}$, and the norm is bounded uniformly in $\e$ 
then $u_\e$ it is a solution of a divergence form equation 
$$\p_t u_\e- \sum_{i,j=1}^n  a^\e_{ij}(x,t) X_i^\e X_j^\e u_\e=0 ,$$
with $a_{ij}^\e$ of class $C^\alpha$ such that for every 
 $K$ be a compact sets such that  $K\subset\subset {Q}$ and  $2\delta=d_0(K, \p_p Q)$
there exists a positive constant $C_0$  such that 
$$ || a_{ij}^\e||_{C^{ \alpha}_{\e,X}(K_\delta)} \leq C_0,$$ 
for every  $\e\in (0,1)$.
Consequently,  by Proposition \ref{ellek} there exists a constant $C_2$ such that 
$$||u_\e||_{C^2(Q)} \le C_2.$$
The conclusion immediately follows by induction. 
\end{proof}

\end{document}